\documentclass[11pt, oneside]{article}   	
\usepackage{geometry}                		
\geometry{letterpaper}                   		
\usepackage{graphicx}				
\usepackage{amssymb}
\usepackage{amsthm}
\usepackage{color} 
\usepackage{enumerate}

\usepackage{amsmath}
\numberwithin{equation}{section}
\usepackage{cite}
\usepackage{booktabs,makecell,multirow}

\usepackage{pgfplots}
\usepackage{amsfonts}
\usepackage{amssymb}
\usepackage{indentfirst}
\usepackage{mathrsfs}
\usepackage{amsfonts,amsmath,latexsym,bm}
\usepackage{amsthm}
\usepackage{float}
\usepackage{amsmath}
\usepackage{booktabs,makecell,multirow}
\usepackage{url}

\usepackage{stmaryrd}
\usepackage{graphicx, epstopdf, epsfig}
\usepackage{makecell,multirow}
\usepackage{subfigure}
\usepackage{caption}
\usepackage[linesnumbered,boxed,ruled,commentsnumbered]{algorithm2e}


\usepackage[pdftex, pdfpagelabels, bookmarks, hyperindex, hyperfigures,
colorlinks, pagebackref]{hyperref}


\newcommand{\norm}[1]{\left\|#1\right\|}
\newtheorem{lem}{Lemma}[section]
\newtheorem{thm}{Theorem}[section]

\newtheorem{rem}{Remark}[section]

\newtheorem{assum}{Assumption}[section]

\begin{document}
\title{High order mixed finite elements with mass lumping for elasticity on triangular grids  \thanks
  {
    This work was supported in part by   National Natural Science Foundation of
    China (11771312).
  }
}
\author{Yan Yang$^{1}$\thanks{{Email:yyan2011@163.com}}, \
	\ Xiaoping Xie$^2$ \thanks{Corresponding author. Email: xpxie@scu.edu.cn}\\
	\\[2mm]
	\footnotesize{1  School of Sciences, Southwest Petroleum University, Chengdu 610500, China} 
	\\
	\footnotesize{ 2  School of Mathematics, Sichuan University, Chengdu 610064, China}}


%
\date{}							

\maketitle

\begin{abstract}
	A family of conforming mixed finite elements    with mass lumping on triangular grids are presented for linear elasticity.  The stress field is approximated by symmetric $H({\rm div})-P_k (k\geq 3)$  polynomial tensors enriched with higher order bubbles so as  to allow mass lumping, which can be viewed as the Hu-Zhang elements enriched with higher order interior bubble functions.  
	The displacement field is approximated by     $C^{-1}-P_{k-1}$ polynomial vectors enriched  with higher order terms to ensure the stability condition.  For both the proposed mixed elements and their mass lumping schemes, optimal error estimates are  derived for the stress with $H(\rm div)$ norm and the displacement with $L^2$ norm.  Numerical results confirm the theoretical analysis.

\end{abstract}

{\bf Keywords}  linear elasticity,   mixed finite element,  mass lumping, error estimate

{\bf AMS subject classifications.} 65N15, 65N30, 74H15, 74S05

\section{Introduction}

Let $\Omega \subset \mathbb{R}^2$ be a polygonal region with   boundary $\partial\Omega$. 
We consider the   following   mixed variational system of linear elasticity based on the Helligner-Reissner principle: Find $({\sigma},{u})\in \Sigma\times V :=H({\rm div}, \Omega; \mathbb{S})\times L^2(\Omega; \mathbb{R}^2)$, such that
\begin{equation}\label{continuous}
\left\{ {\begin{array}{*{20}{c}}
  {(\mathcal{A}{\sigma} ,\tau ) + ({\rm div}\tau ,u ) = 0}&{\forall\tau  \in \Sigma ,} \\
  {-({\rm div}{\sigma} ,v) = (f,v)}&{\forall v \in V.}
\end{array}} \right.
\end{equation}
Here  ${\sigma}:\Omega\rightarrow  \mathbb{S}:=\mathbb{R}^{2\times 2}_{\text{sym}}$ denotes  the symmetric $2\times 2$ stress tensor field,   ${u}:\Omega\to \mathbb{R}^2$  the displacement field,  
and $\mathcal{A}{\sigma} \in \mathbb{S}$ the compliance tensor with
\begin{eqnarray}
\mathcal{A}{\sigma}:=\frac{1}{2\mu}\left({\sigma}-\frac{\lambda}{2\mu+2\lambda}\text{{\rm tr}}({\sigma})I\right),
\end{eqnarray}
where $\lambda> 0,\mu>0$ are the Lam\'e coefficients, $\text{{\rm tr}}({\sigma})$   the trace of ${\sigma}$,  $I$   the $2\times 2$ identity matrix, and
${f}$   the  body force.  $H({\rm div}, \Omega; \mathbb{S})$ denotes the space of square-integrable symmetric matrix fields with square-integrable divergence, and $L^2(\Omega; \mathbb{R}^2)$  the space of square-integrable vector fields. The $L^2$ inner products on vector and matrix fields are given by
\[\begin{gathered}
( v,w) := \int_\Omega  {v \cdot w dx}  = \int_\Omega  {\sum\limits_{i = 1}^2 {{v_i}{w_i}} dx} ,\;\;v=(v_1,v_2),  w=(w_1,w_2)\in V, \hfill \\
(\sigma ,\tau ) := \int_\Omega  { \sigma :\tau dx}  = \int_\Omega  {\sum\limits_{1 \leq i,j \leq 2} {{\sigma _{ij}}{\tau _{ij}}dx} } ,\;\; \sigma=(\sigma_{ij}) ,\tau=(\tau_{ij}) \in \Sigma, \hfill \\
\end{gathered} \]
respectively.
 
According to the standard theory of mixed  methods  \cite{Brezzi1991Mixed}, a mixed finite element discretization of the weak problem
 \eqref{continuous} requires the pair of  stress and displacement approximations to satisfy  two stability
conditions, i.e. a coercivity condition and an inf-sup condition.  These stability constraints   make it challengeable to construct    stable finite element pairs with   symmetric   stresses. In this field, we refer to \cite{Arnold1984A, Arnold2005Rectangular,Arnold2002Mixed,Awanou2012Two, Adams2005A, Hu2014Simple, Chen2011Conforming,Arnold2008Finite}   for some conforming mixed methods and to \cite{arnold2003nonconforming, Gopalakrishnan2011Symmetric, Hu2007Lower, Yi2006, Hu2018Non} 
   for some nonconforming methods. 
 In \cite{Hu2015Family,Hu2015tetrahedral}  Hu and Zhang designed a family of conforming symmetric mixed finite elements with optimal convergence orders for linear elasticity on  triangular and tetrahedral grids. Later Hu \cite{Hu2014Finite} extended the elements to   
    simplicial grids in $\mathbb{R}^n$ for any positive integer $n$. In these  elements,  the stress is approximated by symmetric $H(\text{div},\Omega;\mathbb{S})-P_k$ polynomial tensors and the displacement is approximated by $L^2(\Omega;\mathbb{R}^n)-P_{k-1}$ polynomial vectors for $k\ge n+1$.

However, for a  mixed finite element discretization based on \eqref{continuous}, a computational drawback is the need to solve an algebraic system of saddle point type like
\begin{equation}\label{mixed-dis}
\left( {\begin{array}{*{20}{c}}
	\mathbb{A} &\mathbb{B}^T\\
	-\mathbb{B} & \mathbb{O}
	\end{array}} \right)
\left( {\begin{array}{{c}}
	{X_1}\\
	{X_2}
	\end{array}} \right)=
\left( {\begin{array}{{c}}
	O\\
	{F}
	\end{array}} \right),
\end{equation}
where $\mathbb{A}$ is a  symmetric and positive definite (SPD) matrix corresponding to the term $(\mathcal{A}{\sigma} , \tau )$ in \eqref{continuous}, and $X_1$ and $X _2$ are the vectors of unknowns for the discrete stress and displacement approximations, respectively. 
One possible approach to resolve this difficulty  is  to apply  `mass lumping' on $(\mathcal{A}{\sigma} , \tau )$ so as to get a diagonal or block-diagonal matrix approximation, $\tilde{\mathbb{A}}$, of  the `mass matrix' $\mathbb{A}$. Replacing $\mathbb{A}$ with $\tilde{\mathbb{A}}$ in the discrete system \eqref{mixed-dis}, we  obtain 
$$X_1=-\tilde{\mathbb{A}}^{-1} \mathbb{B}^TX_2$$
and then
\begin{equation}\label{schur}
\mathbb{B}\tilde{\mathbb{A}}^{-1}\mathbb{B}^TX_2=F.
\end{equation}
 Notice that $\tilde{\mathbb{A}}$  is  diagonal or block-diagonal,  so   is $\tilde{\mathbb{A}}^{-1}$. This means that  the Schur complement   $-\mathbb{B}\tilde{\mathbb{A}}^{-1}\mathbb{B}^T$ is SPD. As a result,   by mass lumping the saddle point system \eqref{mixed-dis} is reduced to the SPD system \eqref{schur},   which can be solved efficiently by many fast algorithms.

The key to achieve mass lumping is to select appropriate numerical quadrature rule,  in which   the  quadrature nodes are required to match the finite element basis functions as well as  maintain sufficient numerical  integration accuracy. 
It has been shown that   mass lumping schemes can be constructed for some finite elements \cite{Ciarlet1978The,Baker1976The,Fix1972EFFECTS, Cohen2001Higher,Chin1999Higher,Mulder2013New,W2001HIGHER,Giraldo2006A,Liu2017Higher,Becache2002New,Cohen2005Mixed,Younes2006A}.
In \cite{Ciarlet1978The,Baker1976The,Fix1972EFFECTS}  the standard linear  triangular/tetrahedral elements with mass lumping were analyzed, where the   quadrature nodes are the vertices of the elements.  Traditional  higher order elements are not suitable for  mass lumping due to the requirements of numerical accuracy and stability, and  one has to use    finite element spaces  enriched with some bubble functions to adapt   mass lumping \cite{Cohen2001Higher,Chin1999Higher,Mulder2013New,W2001HIGHER,Giraldo2006A,Liu2017Higher}.   We note that a family of mixed rectangular and cubic finite elements with mass lumping  were constructed in \cite{Becache2002New}  for   linear elastodynamic problems, where the stress and displacement are approximated by symmetric $H({\rm div})-Q_k$ polynomial tensors and $L^2-Q_{k-1}$ polynomial vectors, respectively, and  the locations of the degrees of freedom for the    finite element spaces correspond to tensor products of one-dimensional quadrature
nodes associated with Gauss-Lobatto  (for stress) or  Gauss-Legendre (for velocity) quadrature formulas. 

 In this paper,  we first modify    Hu-Zhang's mixed conforming finite elements  \cite{Hu2015Family} to obtain a  family of new  elements which allow mass lumping. The stress field is approximated by symmetric $H({\rm div})-P_k (k\geq 3)$  polynomial tensors enriched with higher order bubbles, and  the displacement field   by     $C^{-1}-P_{k-1}$ polynomial vectors enriched  with higher order terms.  Error analysis is carried out for the new elements as well as their mass lumping schemes.

The remainder of this paper is organized as follows.   Section 2 introduces some preliminary results of mixed finite elements, including Hu-Zhang's elements. Sections 3 and 4  are devoted to the construction and  analysis of   the new mixed elements and their mass lumping schemes, respectively.  Finally,   Section 5 gives some numerical experiments   to verify the theoretical results.

\section {Preliminaries}

\subsection{Notations}
For integer $m\geq 0$, let  $H^m(\Omega; X)$ 
be the    Sobolev spaces  consisting of functions with
domain $\Omega$, taking values in    $X=\mathbb{S}$ or $\mathbb{R}^2$, and
with all derivatives of order at most $m$ square-integrable.  The norm and semi-norm on $H^m(\Omega; X)$ are denoted respectively by 
  $\|\cdot \|_m$ and $|\cdot |_m$. In particular, $H^0(\Omega;X)=L^2(\Omega;X)$. 

Suppose   $\mathcal{T}_h=\bigcup\{K\}$  to be a conforming and shape-regular triangulation of the domain $\Omega$ consisting of   triangles. 
For any $K\in \mathcal{T}_h$,  let  $h_K$ denote its diameter, and set $h: = \mathop {\max }\limits_{K \in {\mathcal{T}_h}}  {{h_K}} .$
We  use $P_m(K;X)$ to denote  the set of all polynomials  on  $K$ with  degree at most $m$ and taking values in $X$.  


Throughout the paper, we use $a\lesssim b$ $(a\gtrsim b)$ to denote $a\leq Cb$ $(a\geq Cb)$, 
where   $C$ is a generic positive constant   independent of mesh parameters $h$.

\subsection{Mixed finite element discretization}
%
%

Let $\Sigma_h\subset \Sigma, V_h\subset V$ be two finite-dimensional  spaces for the stress and displacement approximations, respectively. Then the mixed finite element discretization of (\ref{continuous}) reads: Find $(\sigma_h, u_h)\in\Sigma_{h}\times V_{h}$ such that
\begin{equation}\label{discrete1}
\left\{ {\begin{array}{*{20}{rl}}
	{({\cal A}{\sigma _h},\tau_h ) + ({\rm div}\tau_h ,{u_h}) = 0}&{\forall\tau_h  \in {\Sigma _{h}},}\\
	{ - ({\rm div}{\sigma _h},v_h) = (f,v_h)}&{\forall v_h \in {V_{h}}.}
	\end{array}} \right.
\end{equation}

According to the standard theory of mixed finite element methods \cite{Brezzi1974Existence,Brezzi1991Mixed},   the pair   of finite element spaces $\Sigma_h$ and $V_h$  needs  to satisfy the following stability conditions:
\begin{itemize}
\item K-ellipticity condition 
\begin{equation}\label{A1}
(\mathcal{A}\tau_h,\tau_h)\gtrsim \|\tau_h\|_{H({\rm div})}^2 \quad \forall \tau_h \in Z_h:= \left\{ {\left. {{\tau _h} \in {\Sigma _h}} \right|\;({{\rm div}}{\tau _h},v) = 0,\;\;\forall v \in {V_h}} \right\},
\end{equation}
where $\|\cdot\|_{H(\text{div})}$ is the norm on the space $\Sigma$ defined  by
$$\|\tau\|_{H(\text{div})}^2:=\|\tau\|_0^2+\|\text{div}\tau\|_0^2\quad \forall \tau\in \Sigma.$$ 

\item Discrete BB ( inf-sup ) condition 
\begin{equation}\label{A2}
  \mathop {\sup }\limits_{\tau_h  \in {\Sigma _h}} \frac{{({{\rm div}}\tau_h ,v_h)}}{{{\|\tau_h\| _{H({\rm div})}}}} \gtrsim {\|v_h\|_0}\quad \forall v_h \in {V_h}.
\end{equation}
\end{itemize}

%
%

\subsection{Hu-Zhang's mixed conforming elements}

For each $K\in\mathcal{T}_h$,  define an $H({\rm div})$ bubble function space,  $B_{k,K}$, of polynomials of degree $k$ by
\[{B_{k,K}}: = \left\{ {\tau  \in {P_k}(K;\mathbb{S}):{{\left. {\tau \nu } \right|}_{\partial K}} = 0} \right\},\]
where $\nu $ is the normal vector along $\partial K$. Introduce 
the local rigid motion space 
\begin{equation}\label{2-8}
R(K): = \left\{ {v \in {H^1}(K;{\mathbb{R}^2}): {{\nabla v + (\nabla {v})^T}} = 0} \right\}=\text{span}\left\{
\left( \begin{gathered}
1 \hfill \\
0 \hfill \\
\end{gathered}  \right),\ \left( \begin{gathered}
0 \hfill \\
1 \hfill \\
\end{gathered}  \right),\ \left( \begin{gathered}
- x_2 \hfill \\
x_1 \hfill \\
\end{gathered}  \right)\right\}
\end{equation}
 and its orthogonal complement space   with respect to $P_{k-1}(K;\mathbb{R}^2) (k\geq 2)$,  
\begin{equation}\label{2.9}
{R_k^\bot}(K) = \left\{ {v \in {P_{k - 1}}(K;{\mathbb{R}^2}): {{(v,w)}_K} = 0,\;\forall w \in R(K)} \right\}.
\end{equation}
The    following result holds.

\begin{lem}\label{lem2.1}\cite{Hu2014Finite}
	For any $K\in\mathcal{T}_h$ and $k\geq 2$, it holds that
	\begin{equation}
	R_k^\bot(K)={\rm div} B_{k,K}.
	\end{equation}
\end{lem}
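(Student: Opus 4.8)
The plan is to prove the two inclusions separately: the inclusion $\operatorname{div} B_{k,K}\subseteq R_k^\bot(K)$ by integration by parts, and the reverse inclusion by showing that the two spaces have equal dimension. For the first inclusion, note that any $\tau\in B_{k,K}\subseteq P_k(K;\mathbb S)$ has $\operatorname{div}\tau\in P_{k-1}(K;\mathbb R^2)$, so only $L^2$-orthogonality to $R(K)$ remains to be checked. For $w\in R(K)$, Green's formula together with the symmetry of $\tau$ gives
\[
(\operatorname{div}\tau,\ w)_K = -\tfrac12\big(\tau,\ \nabla w+(\nabla w)^T\big)_K + \int_{\partial K}(\tau\nu)\cdot w\,ds .
\]
The volume term vanishes because $\nabla w+(\nabla w)^T=0$ for a rigid motion, and the boundary term vanishes because $\tau\nu|_{\partial K}=0$ by the definition of $B_{k,K}$; hence $\operatorname{div}\tau\in R_k^\bot(K)$.

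It then remains to match dimensions. Since $R(K)\subseteq P_{k-1}(K;\mathbb R^2)$ for $k\ge2$ and $\dim R(K)=3$, one has $\dim R_k^\bot(K)=2\dim P_{k-1}(K;\mathbb R)-3=k(k+1)-3$, and by rank--nullity $\dim(\operatorname{div}B_{k,K})=\dim B_{k,K}-\dim(B_{k,K}\cap\ker\operatorname{div})$. For the first term I would count the boundary constraints against $\dim P_k(K;\mathbb S)=\tfrac32(k+1)(k+2)$: the condition $\tau\nu=0$ on each of the three edges gives $2(k+1)$ scalar equations, i.e. $6(k+1)$ in all, and the only dependencies sit at the three vertices, where the four equations coming from the two incident edges force the single symmetric value $\tau(v)=0$ (three equations, since the two edge normals span $\mathbb R^2$), i.e. exactly one redundancy per vertex. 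This yields $\dim B_{k,K}=\tfrac32(k+1)(k+2)-6(k+1)+3=\tfrac32 k(k-1)$.

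For the kernel I would invoke the Airy stress function: on the contractible triangle every divergence-free symmetric $\tau\in P_k(K;\mathbb S)$ equals $J\phi:=\left(\begin{smallmatrix}\partial_2\partial_2\phi&-\partial_1\partial_2\phi\\-\partial_1\partial_2\phi&\partial_1\partial_1\phi\end{smallmatrix}\right)$ for some $\phi\in P_{k+2}(K)$, with $\ker J=P_1(K)$. Computing the normal trace along an edge with tangential derivative $\partial_s$ gives $\tau\nu=\partial_s(\partial_2\phi,-\partial_1\phi)$, so $\tau\nu|_{\partial K}=0$ forces $\nabla\phi$ to be constant on each edge; continuity at the vertices then makes this one global constant on $\partial K$. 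After subtracting a suitable affine function (an element of $\ker J=P_1(K)$), $\phi$ vanishes to second order on $\partial K$, hence is divisible by each $\lambda_i^2$ and therefore by $(\lambda_1\lambda_2\lambda_3)^2$, where $\lambda_1,\lambda_2,\lambda_3$ are the barycentric coordinates of $K$. Thus $B_{k,K}\cap\ker\operatorname{div}=J\big((\lambda_1\lambda_2\lambda_3)^2P_{k-4}(K)\big)$, on which $J$ is injective, so its dimension is $\dim P_{k-4}(K)=\tfrac12(k-2)(k-3)$ (read as $0$ when $k=2,3$).

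Combining the two counts, $\dim(\operatorname{div}B_{k,K})=\tfrac32 k(k-1)-\tfrac12(k-2)(k-3)=k(k+1)-3=\dim R_k^\bot(K)$, which together with the inclusion forces equality. The main obstacle is the kernel computation: correctly translating $\tau\nu|_{\partial K}=0$ into ``$\nabla\phi$ globally constant on $\partial K$'' through the Airy representation, and then extracting the factor $(\lambda_1\lambda_2\lambda_3)^2$ from the second-order vanishing. The vertex-redundancy bookkeeping in the count of $\dim B_{k,K}$ is the secondary delicate point.
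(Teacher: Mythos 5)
Your argument is correct, but it is genuinely different from the one behind the paper's citation: the paper gives no proof of this lemma and defers entirely to \cite{Hu2014Finite}, where the inclusion ${\rm div}\,B_{k,K}\subseteq R_k^\bot(K)$ is obtained by the same integration by parts you use, while the reverse inclusion is proved by a duality argument rather than a dimension count. There one uses the explicit decomposition $B_{k,K}=\bigoplus_{i<j}\lambda_i\lambda_j P_{k-2}(K)T_{ij}$ (with $T_{ij}=t_{ij}t_{ij}^T$ built from the edge tangents) and shows that any $v\in P_{k-1}(K;\mathbb{R}^2)$ orthogonal to ${\rm div}\,B_{k,K}$ satisfies $\bigl(\tau,\tfrac12(\nabla v+(\nabla v)^T)\bigr)_K=0$ for all such $\tau$, which by a suitable choice of the $P_{k-2}$ coefficients forces $\nabla v+(\nabla v)^T=0$, i.e.\ $v\in R(K)$; taking orthogonal complements inside $P_{k-1}(K;\mathbb{R}^2)$ then gives $R_k^\bot(K)\subseteq{\rm div}\,B_{k,K}$ with no counting at all. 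Your route instead computes $\dim B_{k,K}=\tfrac32k(k-1)$ and identifies the divergence-free kernel via the Airy function; both the constraint count and the kernel computation check out, and the totals match $\dim R_k^\bot(K)=k(k+1)-3$. What your approach buys is an explicit description of $B_{k,K}\cap\ker{\rm div}$ as $J\bigl((\lambda_1\lambda_2\lambda_3)^2P_{k-4}(K)\bigr)$, which is of independent interest; what it costs is exactly the bookkeeping you flag. On that point, note that the ``only dependencies sit at the vertices'' claim is actually dispensable in the direction you need it: since the inclusion already gives $\dim({\rm div}\,B_{k,K})\le\dim R_k^\bot(K)$, it suffices to prove the \emph{lower} bound $\dim B_{k,K}\ge\tfrac32k(k-1)$, and for that you only need the rank of the $6(k+1)$ boundary conditions to be \emph{at most} $6(k+1)-3$, which follows immediately from the fact that at each vertex the four functionals $\tau\mapsto\tau(v)\nu_E$, $\tau\mapsto\tau(v)\nu_{E'}$ factor through the three-dimensional value $\tau(v)\in\mathbb{S}$. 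Paired with the exact (upper-bound) identification of the kernel from the Airy argument, this closes the proof without ever establishing the exact rank, so your ``secondary delicate point'' is in fact harmless.
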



For $k\geq 3$, introduce the following global finite element spaces  \cite{Hu2015Family}:
\begin{alignat}{1}  
&{\Sigma _{k,h}}: = \widetilde\Sigma _{k,h}+B_{k,h},\\
&{V_{k ,h}}: = \left\{ {v \in {L^2}(\Omega ;{\mathbb{R}^2}):\;{{\left. v \right|}_K} \in {P_{k - 1}}(K;{\mathbb{R}^2}),\;\forall K \in {\mathcal{T}_h}} \right\}, 
\end{alignat}
where
\begin{alignat}{1}  
& B_{k,h}:={\left\{ {\tau  \in H({\rm div},\Omega ;\mathbb{S}):{{\left. \tau  \right|}_K} \in {B_{k,K}},\;\forall K \in {{\cal T}_h}} \right\},}\\
&\widetilde\Sigma _{k,h}: = \left\{ {\tau  \in {H^1}(\Omega ;\mathbb{S}):\;{{\left. \tau  \right|}_K} \in {P_k}(K;\mathbb{S})},\ \forall K\in\mathcal{T}_h \right\}.
\end{alignat}
It is easy to see that   $\mathbb{S}$ has a canonical basis: 
$$\mathbb{T}_1:=\left(\begin{array}{cc}
1 & 0\\ 0 & 0
\end{array}\right),\
\mathbb{T}_2:=\left(\begin{array}{cc}
0 & 1\\ 1 & 0
\end{array}\right),\
\mathbb{T}_3:=\left(\begin{array}{cc}
0 & 0\\ 0 & 1
\end{array}\right).$$

For any edge $E$ of $\mathcal{T}_h$, let $t_E$ and $\nu_E$ be its unit tangent and norm vectors, respectively. Define
\begin{equation}\label{2-14}
T_E:=t_Et_E^T.
\end{equation}
Let $T_{E,j}^\bot\in\mathbb{S}$ ($j=1,2$) be two  orthogonal complement matrices  of $T_E$ with
\begin{equation}
T_{E,j}^\bot:T_E=0, \ T_{E,j}^\bot:T_{E,j}^\bot=1\ 
\text{ and } T_{E,1}^\bot:T_{E,2}^\bot=0.
\end{equation}
Here $A:B=\sum\limits_{i=1}^{n}\sum\limits_{i=1}^{n}a_{ij}b_{ij}$ for  $A=(a_{ij})_{i,j=1}^n$ and  $B=(b_{ij})_{i,j=1}^n$.
It has been shown in \cite{Hu2015Family} that
$$\mathbb{S}=\text{span}\{\mathbb{T}_1,\mathbb{T}_2,\mathbb{T}_3\}=\text{span}\{T_E,T_{E,1}^\bot,T_{E,2}^\bot\}.$$

Let $\left\{\chi_i\right\}_{i=1}^l$ be the set  of nodes for the Lagrange element of order $k$ and $\left\{\zeta_i\right\}_{i=1}^l$ be their associated Lagrange node basis functions such that
\begin{equation}\label{2-16}
\zeta_i(\chi_j)=\delta_{ij}, \ i,j=1, 2,\cdots,l.
\end{equation}
Then the basis functions of $\Sigma_{k,h}$ on $K$ fall into the following four classes  \cite{Hu2015Family}:
\begin{enumerate}[(1)]
	\item Vertex-based basis functions. If  $\chi_i$ is a vertex, the three associated basis functions of $\Sigma_{k,h}$ are $\zeta_i \mathbb{T}_j, j=1,2,3.$
	\item Volumed-based functions. If $\chi_i$ is a node inside $K$, the three associated basis functions of $\Sigma_{k,h}$ are $\zeta_i \mathbb{T}_j, j=1,2,3.$
	\item Edge-based basis functions with nonzero fluxes. If $\chi_i$ is a node on edge $E$ (not the vertex), the two associated basis functions of $\Sigma_{k,h}$ are $\zeta_i T_{E,j}^\bot, j=1,2.$
	\item Edge-based bubble functions. If $\chi_i\in K$ is a node on edge $E$ (not the vertex) shared by elements $K_1$ and $K_2$, then    $\left.\zeta_iT_E\nu_E\right|_E\equiv 0$ due to   \eqref{2-14}, and then the $H({\rm div})$ bubble functions in $\Sigma_{k,h}$ are $\left.\zeta_i\right|_{K_j}T_E, j=1,2.$
\end{enumerate}

\begin{thm}\cite{Hu2015Family,Hu2014Finite}
	Let $(\sigma,u)\in \Sigma\times V$ and $(\sigma_h,u_h)\in \Sigma_{h}\times V_{h}$, with $$\Sigma_{h}=\Sigma_{k,h} \ \text{ and }  \   V_h=V_{k,h},$$ solve (\ref{continuous}) and (\ref{discrete1}), respectively.  If $\sigma\in H^{k+1}(\Omega;\mathbb{S})$ and $v\in H^k(\Omega;\mathbb{R}^2)$ , then
	\begin{equation}
	{\left\| {\sigma  - {\sigma _h}} \right\|_{H({\rm div})}} + {\left\| {u - {u_h}} \right\|_0} \lesssim {h^k}\left( {{{\left\| \sigma  \right\|}_{k + 1}} + {{\left\| u \right\|}_k}} \right)
	\end{equation}
	and
	\begin{equation}
	{\left\| {\sigma  - {\sigma _h}} \right\|_0} \lesssim {h^{k + 1}}{\left\| \sigma  \right\|_{k + 1}}.
	\end{equation}
\end{thm}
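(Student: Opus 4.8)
The plan is to follow the classical error analysis for mixed finite elements, combining the Brezzi stability theory with the approximation properties of $\Sigma_{k,h}$ and $V_{k,h}$, and exploiting a commuting interpolation operator to upgrade the $L^2$ estimate for the stress. First I would verify the two stability conditions \eqref{A1} and \eqref{A2}. The K-ellipticity \eqref{A1} is immediate: since $\mathcal{A}$ is uniformly SPD on $\mathbb{S}$ and every $\tau_h \in Z_h$ satisfies ${\rm div}\,\tau_h = 0$ in the $L^2$-orthogonal sense against $V_{k,h} \supseteq {\rm div}\,\Sigma_{k,h}$, one has $\|\tau_h\|_{H({\rm div})} = \|\tau_h\|_0$ on $Z_h$, whence $(\mathcal{A}\tau_h,\tau_h) \gtrsim \|\tau_h\|_0^2 = \|\tau_h\|_{H({\rm div})}^2$. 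The inf-sup condition \eqref{A2} is the genuinely hard stability statement, and I would invoke it directly from Hu--Zhang \cite{Hu2015Family,Hu2014Finite}, where it is proved using the edge-based and bubble basis functions together with Lemma \ref{lem2.1}. Granted both conditions, Brezzi's theorem yields the quasi-optimal bound
\begin{equation*}
\|\sigma - \sigma_h\|_{H({\rm div})} + \|u - u_h\|_0 \lesssim \inf_{\tau_h \in \Sigma_{k,h}} \|\sigma - \tau_h\|_{H({\rm div})} + \inf_{v_h \in V_{k,h}} \|u - v_h\|_0.
\end{equation*}

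The central tool for the sharp estimates is a commuting interpolation operator $\Pi_h : H^1(\Omega;\mathbb{S}) \cap \Sigma \to \Sigma_{k,h}$ satisfying ${\rm div}\,\Pi_h \sigma = P_h\,{\rm div}\,\sigma$, where $P_h$ denotes the $L^2$-projection onto $V_{k,h}$, together with the optimal bound $\|\sigma - \Pi_h\sigma\|_0 \lesssim h^{k+1}\|\sigma\|_{k+1}$. Since ${\rm div}\,\Sigma_{k,h} \subseteq V_{k,h}$ (the divergence of a piecewise $P_k$ tensor is a piecewise $P_{k-1}$ vector), the second equation of \eqref{discrete1} gives ${\rm div}\,\sigma_h = P_h\,{\rm div}\,\sigma = {\rm div}\,\Pi_h\sigma$, so that $e_h := \sigma_h - \Pi_h\sigma \in Z_h$. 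Testing the first error equation $(\mathcal{A}(\sigma - \sigma_h),\tau_h) + ({\rm div}\,\tau_h, u - u_h) = 0$ with $\tau_h = e_h$, the divergence term vanishes and I obtain $(\mathcal{A}e_h, e_h) = (\mathcal{A}(\sigma - \Pi_h\sigma), e_h)$. Then K-ellipticity on $Z_h$, the boundedness of $\mathcal{A}$, and the Cauchy--Schwarz inequality give $\|e_h\|_0 \lesssim \|\sigma - \Pi_h\sigma\|_0 \lesssim h^{k+1}\|\sigma\|_{k+1}$, and the triangle inequality delivers the improved estimate $\|\sigma - \sigma_h\|_0 \lesssim h^{k+1}\|\sigma\|_{k+1}$.

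It remains to assemble the $H({\rm div})$ and displacement bounds. For the divergence, the commuting property yields ${\rm div}(\sigma - \sigma_h) = {\rm div}\,\sigma - P_h\,{\rm div}\,\sigma$, so standard $L^2$-projection approximation gives $\|{\rm div}(\sigma - \sigma_h)\|_0 \lesssim h^k\|{\rm div}\,\sigma\|_k \lesssim h^k\|\sigma\|_{k+1}$; combined with the $L^2$ stress estimate this produces $\|\sigma - \sigma_h\|_{H({\rm div})} \lesssim h^k\|\sigma\|_{k+1}$. For the displacement I would estimate $P_h u - u_h$ through the inf-sup condition \eqref{A2}: for any $\tau_h$, $({\rm div}\,\tau_h, P_h u - u) = 0$ because ${\rm div}\,\tau_h \in V_{k,h}$, while the first error equation bounds $({\rm div}\,\tau_h, u - u_h) = -(\mathcal{A}(\sigma - \sigma_h),\tau_h) \lesssim \|\sigma - \sigma_h\|_0 \|\tau_h\|_{H({\rm div})}$; hence $\|P_h u - u_h\|_0 \lesssim \|\sigma - \sigma_h\|_0 \lesssim h^{k+1}\|\sigma\|_{k+1}$, and with $\|u - P_h u\|_0 \lesssim h^k\|u\|_k$ the triangle inequality finishes the first displayed estimate. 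The main obstacle throughout is the construction of the commuting, symmetry- and $H({\rm div})$-conformity-preserving interpolation operator $\Pi_h$ with optimal approximation order; this is precisely the delicate ingredient supplied by the Hu--Zhang framework, relying on Lemma \ref{lem2.1} to match the divergence of the bubble part $B_{k,h}$ with the complementary displacement modes, and I would adapt their construction rather than rebuild it.
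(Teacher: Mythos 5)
Your proposal is correct and follows essentially the same route as the paper and its cited sources: the paper states this theorem as a quoted result from \cite{Hu2015Family,Hu2014Finite} and proves its own analogue (Theorem \ref{thm3-1}) exactly as you do, namely via the K-ellipticity from ${\rm div}\,\Sigma_{k,h}\subseteq V_{k,h}$, the Hu--Zhang inf-sup condition, and the commuting interpolation operator $\Pi_h$ with $P_h\,{\rm div}\,\tau={\rm div}\,\Pi_h\tau$ (cf.\ Remark \ref{rem3.1}) to get the improved $L^2$ stress bound by testing with $\sigma_h-\Pi_h\sigma\in Z_h$. No gaps; the only ingredient you defer, the construction of $\Pi_h$ and the inf-sup proof, is likewise deferred to \cite{Hu2015Family,Hu2014Finite} by the paper itself.
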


\subsection{Mass lumping for Hu-Zhang elements?}

To solve the discrete system (\ref{discrete1}), we need to compute the inverse of the mass matrix corresponding to the term $(\mathcal{A}{\sigma_h} ,\tau_h )$. 

Let us first consider the local mass matrix on  element  $K\in\mathcal{T}_h$. Recall that $ \chi_i \ ( i=1,2,\cdots, l)$ are the nodes of the Lagrange element of order $k$ and $ \zeta_i\  ( i=1,2,\cdots, l)$ are the  associated Lagrange node basis functions.
 Thus, the  basis functions of $\Sigma_{k,h}$ on $K$ can be denoted by
$$\varphi_{3(m-1)+s}:=\zeta_m{T}_s, \ \ m=1,2,\cdots, l; s=1,2,3,$$
where $T_s\in \{\mathbb{T}_1,\mathbb{T}_2,\mathbb{T}_3\}$ if $\chi_m$ is a vertex or a node inside $K$, and $T_s\in \{T_E, T_{E,1}^\bot,T_{E,2}^\bot\}$ if $\chi_m$ is a node on edge $E$ (not the vertex). Then the local mass matrix $A_K$ on $K$ is given by
  $$(A_K)_{ij}:=\left.(\mathcal{A}\varphi_i,\varphi_j)\right|_K, \quad i,j=1,2,\cdots, 3l.$$

During the finite element method, we commonly evaluate the integrals approximately by using a numerical integration formula in each element $K$. To achieve mass lumping, the usual way is  to choose the quadrature points to be the nodes $\{\chi_i\}_{i=1}^l $ on $ K$, and   the quadrature rule is of the form
\begin{equation}\label{3-1}
\int_K fdx\approx I_{k,K}(f):=\sum_{i=1}^{l}w_if(\chi_i),
\end{equation}
where $\{w_i\}_{i=1}^l$ are the weights. 
Then we have
\begin{equation}\label{int-orth}
I_{k,K}(\zeta_i,\zeta_j)=w_i\delta_{ij}, \ \ i,j=1,2,\cdots, l.
\end{equation}
For $m,n=1,2,\cdots,l$ and $s,q=1,2,3$, set
$$i=3(m-1)+s,\ \ \ j=3(n-1)+q,$$
then from  \eqref{int-orth}
it follows
\begin{equation}\label{2-20}
\begin{gathered}
(A_K)_{ij} \approx (\tilde{A}_K)_{ij} : = {I_{k,K}}\left( {\mathcal{A}{\varphi _i},{\varphi _j}} \right) = {I_{k,K}}\left( {\mathcal{A}{\zeta_m}{T_{s}},{\zeta_n}{T_{q}}} \right) \hfill \\
\ \ \ \ \ = \left\{ {\begin{array}{*{20}{c}}
	{0,}&{m \ne n,} \\
	{w_m(\mathcal{A}T_s:T_q)}&{m = n.}
	\end{array}} \right. \hfill \\
\end{gathered}
\end{equation}
This means that the approximate local mass matrix $\tilde{A}_k$ is block-diagonal and of the form 
\begin{equation}\label{2-22}
\tilde{A}_K=diag(w_1B_1,w_2B_2,\cdots,w_lB_l),
\end{equation}
where $B_m(m=1,2,\cdots,l)$ are $3\times 3$ SPD matrices. For example, if $\chi_m$ is a vertex or a node inside $K$,  then
\[{B_m} = \frac{1}{{4\mu (\mu  + \lambda )}}\left( {\begin{array}{*{20}{c}}
	{2\mu  + \lambda }&0&{ - \lambda }\\
	0&{4(\mu  + \lambda )}&0\\
	{ - \lambda }&0&{2\mu  + \lambda }
	\end{array}} \right).\]

However,  the accuracy of  numerical  integration has to be   taken into account.  From the standard theory  \cite{Ciarlet1978The,Raviart1973The,Baker1976The}, the following condition is required to satisfy  so as  to maintain the accuracy of the   scheme  (\ref{discrete1}):
\begin{itemize}
\item[(A1)] The quadrature rule \eqref{3-1} must be exact for $P_{2k-2}$.
\end{itemize}
   Unfortunately,  the standard $P_k$ Lagrange elements   fail to satisfy this condition    for $k\geq3$ (cf. \cite{Cohen2001Higher}).
In other words, Hu-Zhang's elements do not allow mass lumping without loss of numerical accuracy.

\section{Modified mixed conforming finite elements for elasticity}
\subsection{${P}_{k,k'}$-Lagrange finite elements  for mass lumping}

As mentioned before, the standard $P_k$ Lagrange elements with $k\geq 3$  fail to satisfy  the accuracy condition, (A1), of the quadrature rule \eqref{3-1} for mass lumping.   For   wave problems,  as shown in  \cite{Cohen2001Higher,Chin1999Higher,Mulder2013New,W2001HIGHER,Giraldo2006A,Liu2017Higher},  an efficient way to address this difficulty is to construct a slightly larger finite element space  
\begin{equation}\label{p+}
{P}_{k,k'}(K;\mathbb{R}):=P_k(K;\mathbb{R})+bP_{k'-3}(K;\mathbb{R})=P_k(K;\mathbb{R}) \oplus b\sum\limits_{i = k-2}^{k' - 3} {P_{i}^{hom}(K;\mathbb{R})}.
\end{equation}
Here $k'>k$, and $b=\lambda_1\lambda_2\lambda_3$ is the bubble function on the element $K$ with $\lambda_i\ (i=1,2,3)$ being the barycentric coordinates. $P_i^{hom}(K;\mathbb{R})$ denotes the set of homogeneous polynomials on $K$ of degree   $i$.  The symbol ``$\oplus$" means that $P_k(K;\mathbb{R})  \bigcap b {P_{i}^{hom}(K;\mathbb{R})}=\{0\}$  for   $i=k-2,k-1, \cdots, k'-3$.

Let $\left\{\chi_i\right\}_{i=1}^r$ be the set of nodes  for the $P_{k,k'} $-Lagrange element. Then   the corresponding quadrature rule is of the form
\begin{equation}\label{4.2}
\int_K fdx \approx I_{k,k',K}(f):= \sum\limits_{i=1}^r w_if(\chi_i),
\end{equation}
where $\{w_i\}_{i=1}^r$ are the weights, and $\sum\limits_{i=1}^r w_i = \text{meas}(K)$.

To maintain the accuracy and stability of finite element scheme, the following two conditions are required 
(cf. \cite{Cohen2001Higher,Chin1999Higher}):
\begin{itemize}

\item[ (B1)]   The weights $w_i (i=1,\cdots,r)$ in     (\ref{4.2}) should be strictly positive;

\item[ (B2)]  The quadrature rule (\ref{4.2})  must be exact for $P_{k+k'-2}$.
\end{itemize}

  Table 1 lists several $P_{k,k'}$-finite elements  which satisfy (B1) and (B2)  with 
  $3\leq k\le 5$.
In the table, a given node $(\alpha_1,\alpha_2,\alpha_3)$ represents an  equivalence class which  includes all the nodes obtained by  taking all the permutations of the barycentric coordinates $\alpha_i$.  For instance, the class $(0,0,1)$ includes three points, $(0,0,1)$, $(0,1,0)$, and $(1,0,0)$; the  class $(\alpha,0,1-\alpha)$ includes 
\begin{equation}
(\alpha,0,1-\alpha), (0,\alpha,1-\alpha), (\alpha,1-\alpha,0), (1-\alpha,\alpha,0), (1-\alpha,0,\alpha), (0,1-\alpha,\alpha).
\end{equation}
%
%

\begin{table}
	\centering
	\caption{${P}_{k,k'}$-Lagrange  triangular elements.
	}
	\begin{tabular}{ @{} c|cc|cc|c @{} }
		\hline
		space &$k$&$k'$&class&weight&position parameters\\
		\hline
		$P_{3,4}$\cite{Cohen2001Higher,Chin1999Higher}&3&4&(0,0,1)&$(8-\sqrt 7)/720$&\\
		&&&$(\alpha,0,1-\alpha)$&$(7+4\sqrt{7})/720$&$1/2-\sqrt{1/(3\sqrt{7})-1/12}$\\
		&&&$(\alpha,\alpha,1-2\alpha)$&$7(14-\sqrt{7})/720$&$(7-\sqrt{7})/21$\\
		\hline
		$P_{3,5}$\cite{Chin1999Higher}&3&5&(0,0,1)&0.00356517965360224101681201&\\
		&&&$(\alpha,0,1-\alpha)$&0.0147847080884026469663777&0.307745941625991646104616\\
		&&&$(\alpha,\alpha,1-2\alpha)$&0.0509423265134759070757019&0.118613686396592868190663\\
		&&&$(\alpha,\alpha,1-2\alpha)$&0.0825897443227832246413973&0.425340125989747152025431\\
		\hline	
		$P_{4,5}$\cite{Chin1999Higher,Mulder2013New}&4&5&(0,0,1)&1/315&\\
		&&&$(1/2,0,1/2)$&4/315&\\
		&&&$(\alpha,0,1-\alpha)$&3/280&$1/2(1-1/\sqrt{3})$\\
		&&&$(\alpha,\alpha,1-2\alpha)$&$163/2520-47\sqrt{7}/8820$&$(5-\sqrt{7})/18$\\
		&&&$(\alpha,\alpha,1-2\alpha)$&$163/2520+47\sqrt{7}/8820$&$(5+\sqrt{7})/18$\\
		\hline	
		$P_{4,6}$\cite{Mulder2013New}&4&6&(0,0,1)&0.00150915593385883937469324&\\
		&&&$(1/2,0,1/2)$&0.0101871481261788846308014&\\
		&&&$(\alpha,0,1-\alpha)$&0.00699540146387514358396201&0.199632107119457219140683\\
		&&&$(1/3,1/3,1/3)$&0.0660095591593093891810431\\
		&&&$(\alpha,\alpha,1-2\alpha)$&0.0234436060814549086935898&0.0804959191700374444460458\\
		&&&$(\alpha,\beta,1-\alpha-\beta)$&0.0477663836054936418696553&0.107591821784867520262175,\\
		&&&&&0.302912783038363411733216\\
		\hline	
		$P_{5,7}$\cite{Chin1999Higher,Mulder2013New}&5&7&(0,0,1)&0.000709423970679245979296007\\
		&&&$(\alpha,0,1-\alpha)$&0.00348057864048921065844268&0.132264581632713985353888\\
		&&&$(\alpha,0,1-\alpha)$&0.00619056500367662911411813&0.363298074153686045705506\\
		&&&$(\alpha,\alpha,1-2\alpha)$&0.0116261354596175711394984&0.0575276844114101056608175\\
		&&&$(\alpha,\alpha,1-2\alpha)$&0.0459012376307628573770191&0.256859107261959076063891\\
		&&&$(\alpha,\alpha,1-2\alpha)$&0.0345304303772827935283885&0.457836838079161101938503\\
		&&&$(\alpha,\beta,1-\alpha-\beta)$&0.0272785759699962595486715&0.0781925836255170219988860,\\
		&&&&&0.221001218759890007978128\\
		\hline					
	\end{tabular}
\end{table}

\subsection{Modified mixed     element spaces for elasticity}

Inspired by the ${P}_{k,k'}$-Lagrange   elements which allow  mass lumping, in this subsection
 we shall construct a family of new mixed conforming element spaces   based on the modification of Hu-Zhang's elements. 

For $k'>k\geq 3$,
set 
\begin{equation}
\Lambda_{k,k'}:=\left\{\tau\in H({\rm div},\Omega;\mathbb{S}): \left.\tau\right|_K=\sum\limits_{i = k-2}^{k'-3} b{P_{i}^{hom}(K;\mathbb{S})},\ \forall K\in\mathcal{T}_h \right\}.
\end{equation}
Then the modified global finite element spaces for the stress and displacement   are given by
\begin{align}
\Sigma_{k,k',h}:=& \Sigma_{k,h} \oplus \Lambda_{k,k'},\label{3-5.06}\\
{V_{k,k',h}}: =& V_{k,h}+{\rm div}\Lambda_{k,k'}.\label{3-16}
\end{align}
Obviously we have ${\rm div}\Sigma_{k,k',h} \subset {V_{k,k',h}}$.
\begin{rem}
If we define
\begin{align}
{B_{k,k',h}}: =&\left\{ {\tau  \in {{P}_{k,k'}}(K;\mathbb{S}):{{\left. {\tau \nu } \right|}_{\partial K}} = 0,\ \forall K\in\mathcal{T}_h}\right\}=B_{k,h} + \Lambda_{k,k'} ,\label{3-2}\\
\widetilde{\Sigma}_{k,k',h}:= &\left\{\tau\in H^1(\Omega;\mathbb{S}):\left.\tau\right|_K\in P_{k,k'}(K;\mathbb{S}),\forall K\in \mathcal{T}_h \right\}=\widetilde{\Sigma}_{k,h}+ \Lambda_{k,k'},
\end{align}
then we can also write
\begin{align}
\Sigma_{k,k',h}=&\widetilde{\Sigma}_{k,k',h} +{B_{k,k',h}}.
\end{align}
\end{rem}


Let $\left\{\chi_i\right\}$ be set of the nodes for the $P_{k,k'}$-Lagrange element, and $\left\{\zeta_i\right\}$ be the corresponding nodal basis functions satisfying
\begin{equation}\label{3-5}
\zeta_{i}(\chi_j)=\delta_{ij}.
\end{equation}
Similarly to  Hu-Zhang's elements described in Section 2.3,  for each node  $\chi_i$ the associated basis functions of $\Sigma_{k,k',h}$ on $K$  
are given as follows: 
\begin{enumerate}[(1)]
	\item  $\zeta_i \mathbb{T}_j  \ ( j=1,2,3)$, if  $\chi_i$ is a vertex or a  node inside $K$; 
	\item $\zeta_i T_{E,j}^\bot \ (j=1,2) $ and $\left.\zeta_i\right|_{K_j}T_E \ (j=1,2)$, if $\chi_i$ is a node on edge $E$ (not the vertex) shared by elements $K_1$ and $K_2$.
\end{enumerate}

	\subsection{Stability results}
This subsection  is devoted to the stability  analysis and error estimation of the mixed finite element scheme (\ref{discrete1}) with 
$$\Sigma_{h}=\Sigma_{k,k',h} \ \text{ and }  \   V_h=V_{k,k',h}.$$

 Let $\hat{K}$ be the reference element with    vertexes $(0,0), (0,1), (1,0)$. For each $K\in\mathcal{T}_h$, let $F_K$ denote the affine map from $\hat{K}$ onto $K$ so that $F_K(\hat{K})=K$. Let $\chi_0, \chi_1, \chi_2$ be the vertices of triangle $K\in \mathcal{T}_h.$ The referencing mapping is then of the  form
\[x = {F_K}(\hat x ) = {\chi _0} + \left( {\begin{array}{*{20}{c}}
	{\chi_1 - {\chi _0}}& {{\chi _2} - {\chi _0}}
	\end{array}} \right)\hat x:= \chi_0+B_K\hat{x}, \ \ \forall \hat x\in \hat K. \]
By the shape regularity    of $\mathcal{T}_h$, it holds  that
\begin{equation}\label{4-1}
\left\|B_K\right\|_0\lesssim h,\ \ \left\|B_K^{-1}\right\|_0\lesssim h^{-1}.
\end{equation}
We  need to introduce the Piola transform   as follows. Given $\hat{\tau} : \hat{K}\mapsto \mathbb{S}$, $\tau: K \mapsto \mathbb{S}$ is defined by
\begin{equation}\label{Piola}
\tau(x):= B_K\hat{\tau}(\hat{x})B^T_K.
\end{equation}
Clearly this sets up a one-to-one correspondence between  $ L^2(\hat{K};\mathbb{S})$ and $L^2(K;\mathbb{S})$ with 
\begin{equation}
{{{\rm div}\tau (x)} } = {B_K}\widehat{\rm div}\hat \tau (\hat x).
\end{equation}

		Standard scaling arguments yield  the following lemma.
		\begin{lem}
			For any $  K\in\mathcal{T}_h$ and  $\hat{\tau}\in {\hat P}_{k,k'}(\hat{K};\mathbb{S})$,    let $\tau$ is given  by (\ref{Piola}). Then for $1\leq q\leq k$,   
			\begin{equation}\label{3-11}
			{\left| \tau  \right|_{q,K}} \lesssim h^{2-q} {\left| {\det {B_K}} \right|^{\frac{1}{2}}} \  {\left| {\hat \tau } \right|_{q,\hat K}},
			\end{equation}
			\begin{equation}\label{3-12}
			{\left| \hat\tau  \right|_{q,\hat{K}}} \lesssim h^{q-2} {\left| {\det {B_K}} \right|^{-\frac{1}{2}}} \  {\left| { \tau } \right|_{q, K}}.
			\end{equation}
		\end{lem}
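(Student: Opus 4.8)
The plan is to prove the forward estimate (\ref{3-11}) directly by combining the chain rule with the change-of-variables formula, and then to deduce the reverse estimate (\ref{3-12}) by running the same argument on the inverse transform. To begin, I would write the matrix Piola transform (\ref{Piola}) entrywise as $\tau_{ij}(x)=\sum_{m,n}(B_K)_{im}\,\hat\tau_{mn}(\hat x)\,(B_K)_{jn}$ with $\hat x=F_K^{-1}(x)$, so that each component of $\tau$ is a fixed linear combination of the pulled-back components $\hat\tau_{mn}\circ F_K^{-1}$, the coefficients being entries of $B_K$.

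Next I would differentiate $q$ times with respect to $x$. Since $F_K$ is affine with $\hat x=B_K^{-1}(x-\chi_0)$, one has $\partial\hat x/\partial x=B_K^{-1}$, so by the chain rule every application of $\partial_{x_\alpha}$ to $\hat\tau_{mn}\circ F_K^{-1}$ yields an entry of $B_K^{-1}$ times a first-order $\hat x$-derivative of $\hat\tau_{mn}$. Iterating, each $q$-th order $x$-derivative of $\tau$ is a finite sum of $q$-th order $\hat x$-derivatives of the $\hat\tau_{mn}$, each carrying two entries of $B_K$ (from the congruence $B_K(\cdot)B_K^{T}$) and $q$ entries of $B_K^{-1}$ (from the chain rule). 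This gives the pointwise bound
\begin{equation*}
|D^q\tau(x)|\lesssim \norm{B_K}^{2}\,\norm{B_K^{-1}}^{q}\,|D^q\hat\tau(\hat x)|,
\end{equation*}
with an implied constant depending only on $q$. Squaring, integrating over $K$, and changing variables by $x=F_K(\hat x)$, $dx=|\det B_K|\,d\hat x$, I obtain $|\tau|_{q,K}\lesssim \norm{B_K}^{2}\norm{B_K^{-1}}^{q}|\det B_K|^{1/2}|\hat\tau|_{q,\hat K}$. Finally, invoking the shape-regularity bounds (\ref{4-1}) to replace $\norm{B_K}^{2}\norm{B_K^{-1}}^{q}$ by $h^{2}\cdot h^{-q}=h^{2-q}$ (while keeping the factor $|\det B_K|^{1/2}$ intact) produces exactly (\ref{3-11}).

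For the reverse inequality (\ref{3-12}) I would repeat the whole computation starting from the inverse transform $\hat\tau(\hat x)=B_K^{-1}\tau(x)B_K^{-T}$; this merely interchanges the roles of $B_K$ and $B_K^{-1}$ and replaces $|\det B_K|$ by $|\det B_K|^{-1}$, so the factor becomes $\norm{B_K^{-1}}^{2}\norm{B_K}^{q}|\det B_K|^{-1/2}\sim h^{q-2}|\det B_K|^{-1/2}$, which is (\ref{3-12}). The only step that requires genuine care is the accounting of the powers of $B_K$: the two factors coming from the congruence must be kept separate from the $q$ factors coming from differentiation, so that the net weight is $\norm{B_K}^{2}\norm{B_K^{-1}}^{q}$ and \emph{not} the single Jacobian weight $|\det B_K|^{-1}$ that arises for the contravariant, divergence-preserving Piola map. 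The restriction $1\le q\le k$ is simply the range of semi-norms needed in the sequel, and since $q$ is bounded by the fixed degree $k$ the implied constants remain independent of $h$.
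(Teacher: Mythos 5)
Your proof is correct and is precisely the standard scaling argument that the paper invokes for this lemma without writing out the details: expand the congruence $\tau=B_K\hat\tau B_K^T$ entrywise, use the chain rule for the affine map to collect two factors of $B_K$ and $q$ factors of $B_K^{-1}$, change variables to pick up $|\det B_K|^{1/2}$, and apply the shape-regularity bounds \eqref{4-1}; the reverse estimate follows symmetrically. Your bookkeeping of the powers of $B_K$ versus $B_K^{-1}$ is exactly right, so nothing further is needed.
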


		\begin{assum}\label{assum4.1}
			For any $\tau\in\Sigma_{k,k',h}\subset H({\rm div},\Omega;\mathbb{S})$, if
			\begin{equation*}
			{\rm div}\tau|_K=0\ \  \ \ \forall K\in\mathcal{T}_h,
			\end{equation*}
			then
			$\tau=0.$
		\end{assum}
		Define the piecewise $m$-order semi-norm $|\cdot |_{m,h} \ $  $(1\leq m\leq k')$ on ${\Sigma}_{k,k',h}$ as follows:
		
	\begin{equation}
	|\tau_h|_{m,h}:= \left(\sum_{K\in\mathcal{T}_h}|\tau_h|_{m,K}^2\right)^{\frac{1}{2}},\ \tau_h\in {\Sigma}_{k,k',h}.
	\end{equation}
	
		\begin{lem}\label{lem3.3}
			Suppose that  $\tau\in\Sigma_{k,k',h}$  satisfies   Assumption \ref{assum4.1}, then
			\begin{equation}\label{4.5}
			{\left\| \tau  \right\|_0} \lesssim h{\left| \tau  \right|_{1,h}} \lesssim h{\left\| {{{\rm div}}\tau } \right\|_0}.
			\end{equation}

		\end{lem}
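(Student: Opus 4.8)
The plan is to reduce the whole chain to the single estimate
\begin{equation}\label{C}
\|\tau\|_0 \lesssim h\,\|{\rm div}\,\tau\|_0,\qquad \tau\in\Sigma_{k,k',h}.
\end{equation}
Granting \eqref{C}, the pointwise bound $|{\rm div}\,\tau|\le\sqrt{2}\,|\nabla\tau|$ gives $\|{\rm div}\,\tau\|_0\lesssim|\tau|_{1,h}$, so that $\|\tau\|_0\lesssim h\|{\rm div}\,\tau\|_0\lesssim h|\tau|_{1,h}$, which is the first asserted inequality. For the second I would first record the inverse estimate $|\tau|_{1,h}\lesssim h^{-1}\|\tau\|_0$, obtained by combining \eqref{3-11} with $q=1$, \eqref{3-12} with $q=0$, and the equivalence of norms on the finite-dimensional reference space $\hat P_{k,k'}(\hat K;\mathbb S)$; then \eqref{C} yields $|\tau|_{1,h}\lesssim h^{-1}\|\tau\|_0\lesssim\|{\rm div}\,\tau\|_0$, i.e. $h|\tau|_{1,h}\lesssim h\|{\rm div}\,\tau\|_0$. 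Everything therefore rests on \eqref{C}.

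To attack \eqref{C} I would pass to the reference element through the Piola transform \eqref{Piola}. Using \eqref{3-11}--\eqref{3-12} for $q=0$, the identity ${\rm div}\,\tau(x)=B_K\widehat{\rm div}\,\hat\tau(\hat x)$, and the shape-regularity bounds \eqref{4-1}, one checks that, up to shape-regularity constants, $\|\tau\|_{0,K}\sim h^2|\det B_K|^{1/2}\|\hat\tau\|_{0,\hat K}$ and $\|{\rm div}\,\tau\|_{0,K}\sim h\,|\det B_K|^{1/2}\|\widehat{\rm div}\,\hat\tau\|_{0,\hat K}$, so the two sides of \eqref{C} carry the same powers of $h$ and $|\det B_K|$. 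The crucial point, however, is that \eqref{C} \emph{cannot} be localized: its elementwise version would amount to $\|\hat\tau\|_{0,\hat K}\lesssim\|\widehat{\rm div}\,\hat\tau\|_{0,\hat K}$ on $\hat P_{k,k'}(\hat K;\mathbb S)$, and this is false, since that space contains nonzero divergence-free symmetric tensors (for instance the Airy-type field $\mathrm{diag}(0,6\hat x_1)$). Hence the global $H({\rm div})$-continuity of $\tau$ and Assumption~\ref{assum4.1} must both enter essentially: it is only the absence of a \emph{global} elementwise-divergence-free field that makes $\|{\rm div}\,\cdot\|_0$ a norm on $\Sigma_{k,k',h}$, and \eqref{C} quantifies the mesh-scaling of the resulting norm equivalence.

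To exploit this I would argue by contradiction and compactness: if \eqref{C} failed uniformly one would obtain, along a sequence of shape-regular meshes, fields $\tau_n\in\Sigma_{k,k',h_n}$ with $\|\tau_n\|_0=1$ and $\|{\rm div}\,\tau_n\|_0=o(h_n^{-1})$; transplanting to the reference configuration via the scaling estimates and passing to a limit should produce a nonzero field whose divergence vanishes on every element, contradicting Assumption~\ref{assum4.1}. I expect the main obstacle to be exactly this mesh-uniformity: because the triangulation varies along the sequence one cannot extract a limit in a single fixed space, so the compactness has to be carried out on the reference element while tracking, uniformly in $h$, how the interelement normal continuity of $H({\rm div})$ fields kills the locally divergence-free components that the single-element estimate fails to control. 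Making this step quantitative, rather than merely qualitative through Assumption~\ref{assum4.1}, is the heart of the argument.
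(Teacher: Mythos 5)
Your reduction of the two inequalities in \eqref{4.5} to the single bound $\|\tau\|_0\lesssim h\|{\rm div}\,\tau\|_0$ is logically sound, but that bound is essentially the content of the lemma and you never establish it: the compactness/contradiction argument is only sketched, and you yourself flag that the decisive step (extracting a limit across a sequence of different meshes and making Assumption~\ref{assum4.1} quantitative and mesh-uniform) is left open. So the heart of the proof is missing.

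More importantly, the premise on which you base that detour --- that the estimate ``cannot be localized'' because $\hat P_{k,k'}(\hat K;\mathbb S)$ contains nonzero divergence-free symmetric tensors --- misreads the hypothesis. A field such as ${\rm diag}(0,6\hat x_1)$ is precisely a nonzero $\tau$ with ${\rm div}\,\tau|_K=0$ on every element, i.e.\ a $\tau$ that \emph{violates} Assumption~\ref{assum4.1}; the lemma claims \eqref{4.5} only for $\tau$ satisfying it. In every place the lemma is invoked (the minimal-norm bubble $\tau_2\in B_{k,k',h}$ in Theorem~\ref{thm3.0}, the spaces $\Xi_j$ in Lemma~\ref{lem4-6}), $\tau|_K$ ranges over a fixed local subspace on which the elementwise divergence is injective, and that is exactly what the paper exploits: after the Piola transform \eqref{Piola}, $\hat\tau$ lies in a finite-dimensional reference subspace on which both $\|\widehat{\rm div}\,\hat\tau\|_{0,\hat K}$ and $|\hat\tau|_{1,\hat K}$ are norms, so $\|\hat\tau\|_{0,\hat K}\lesssim|\hat\tau|_{1,\hat K}\lesssim\|\widehat{\rm div}\,\hat\tau\|_{0,\hat K}$ by equivalence of norms on finite-dimensional spaces, and the scaling relations \eqref{3-11}--\eqref{3-12} together with ${\rm div}\,\tau=B_K\widehat{\rm div}\,\hat\tau$ then give both inequalities of \eqref{4.5} element by element. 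No global compactness argument is needed, and none could succeed under your literal global reading, since $\Sigma_{k,k',h}$ as a whole does contain nonzero elementwise divergence-free fields.
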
		
		\begin{proof}
			For any $K\in\mathcal{T}_h$, let $\hat{K}$ be the reference element. By \eqref{Piola}, we have
			$$\hat\tau(\hat{x})=B_K^{-1}\tau(x)(B_K^{-1})^{T},$$
			and $\hat{\tau}(\hat{x})$ satisfies   Assumption \ref{assum4.1}. Thus, both $\norm{\widehat{\rm div}\hat{\tau}}_{0,\hat{K}}$ and $\left|\hat{\tau}\right|_{1,\hat{K}}$ are    norms on $\hat{K}$. Then it holds 
			\begin{equation*}\label{3.16}
			\left\|\hat{\tau}\right\|_{0,\hat{K}}\lesssim \left|\hat{\tau}\right|_{1,\hat{K}}\lesssim \norm{\widehat{\rm div}\hat{\tau}}_{0,\hat{K}},
			\end{equation*}
			which, together with  \eqref{3-11} and \eqref{3-12},  implies 
			\begin{equation*}\label{}
			\begin{array}{rlr}
			{\left\|\tau\right\|_{0,K}} =&\norm{B_K\hat\tau B_K^T}_{0,K}\\
			\leq &\norm{B_K}\  {\left| {\det {B_K}} \right|^{\frac{1}{2}}} \norm{\hat\tau}_{0,\hat{K}}\ \norm{B_K^T}\\
			\lesssim &  h^2\left| \det B_K \right|^{\frac{1}{2}}\left|\hat \tau\right|_{1,\hat{K}} & \\
			\lesssim &h\left|\tau\right|_{1,K} & 
			\end{array}
			\end{equation*}
			and
			\begin{equation*}\label{14-8}
			\begin{array}{rlr}
			{\left| \tau \right|_{1,K}} \lesssim &h {\left| {\det {B_K}}  \right|^{\frac{1}{2}}} \  {\left| {\hat \tau} \right|_{1,\hat K}} & \\
			\lesssim &h {\left| {\det {B_K}} \right|^{\frac{1}{2}}} \  {\left\| {\widehat{\rm div}\hat \tau } \right\|_{0,\hat K}}= h   {\left| {\det {B_K}} \right|^{\frac{1}{2}}}   {\left\| B_K^{-1}{{\rm div}\tau } \right\|_{0,K}}& \\
			 \lesssim &{\left\| {{\rm div}\tau } \right\|_{0,K}}.
			\end{array}
			\end{equation*}		
			This completes the proof.		
 			\end{proof}

		In view of  the definitions in \eqref{3-2} and  \eqref{2-8}, 
	  integration by part yields 
	\begin{equation}
		\int_K{\rm div}\tau_h\cdot w_hdx =0\ \ \ \forall \tau_h\in B_{k,k',h},\ w_h\in R(K) , \  K\in\mathcal{T}_h.
	\end{equation}
	Analogous to \eqref{2.9}, we define 
	\begin{equation}\label{3-9}
	{R_{k,k'}^\bot}(K) := \left\{ {v \in V_{k,k',h}: {{(v,w)}_K} = 0,\;\forall w \in R(K)} \right\}.
	\end{equation}
	
	By following the same routines as in \cite{Hu2014Finite}, we can easily derive the following two lemmas.
	\begin{lem}\label{lem3.4}
		For any $K\in\mathcal{T}_h$ and $k\geq 2$,  it holds that
			\begin{equation}\label{3-10}
		{R}_{k,k'}^\bot(K)={\rm div}  \left. B_{k,k',h}\right|_K.
		\end{equation}
	\end{lem}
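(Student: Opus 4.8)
The plan is to prove the set equality $R_{k,k'}^\bot(K) = {\rm div}\,B_{k,k',h}|_K$ by a double inclusion, reducing everything to the Hu--Zhang identity of Lemma \ref{lem2.1} by means of the splittings $B_{k,k',h}|_K = B_{k,K} + \Lambda_{k,k'}|_K$ (from \eqref{3-2}) and $V_{k,k',h}|_K = P_{k-1}(K;\mathbb{R}^2) + {\rm div}\,\Lambda_{k,k'}|_K$ (from \eqref{3-16}), together with the integration-by-parts identity displayed just above \eqref{3-9}. The divergence being a local operator, I would first note $({\rm div}\,\Lambda_{k,k'})|_K = {\rm div}(\Lambda_{k,k'}|_K)$, so that both splittings make sense element by element.

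For the inclusion ${\rm div}\,B_{k,k',h}|_K \subseteq R_{k,k'}^\bot(K)$ I would take an arbitrary $\tau\in B_{k,k',h}|_K$ and write $\tau = \tau_1 + \tau_2$ with $\tau_1\in B_{k,K}$ and $\tau_2\in\Lambda_{k,k'}|_K$. By Lemma \ref{lem2.1} we have ${\rm div}\,\tau_1\in R_k^\bot(K)\subseteq P_{k-1}(K;\mathbb{R}^2)=V_{k,h}|_K$, while ${\rm div}\,\tau_2\in{\rm div}\,\Lambda_{k,k'}|_K$, so ${\rm div}\,\tau\in V_{k,k',h}|_K$; since $\tau\nu|_{\partial K}=0$, the integration-by-parts identity yields $({\rm div}\,\tau,w)_K=0$ for every $w\in R(K)$, and therefore ${\rm div}\,\tau\in R_{k,k'}^\bot(K)$ by \eqref{3-9}.

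For the reverse inclusion I would start from $v\in R_{k,k'}^\bot(K)\subseteq V_{k,k',h}|_K$ and decompose $v = v_1 + {\rm div}\,\tau_2$ with $v_1\in P_{k-1}(K;\mathbb{R}^2)$ and $\tau_2\in\Lambda_{k,k'}|_K$. The integration-by-parts identity annihilates $({\rm div}\,\tau_2,w)_K$ against every $w\in R(K)$, so $(v_1,w)_K = (v,w)_K = 0$ and hence $v_1\in R_k^\bot(K)$. Lemma \ref{lem2.1} then supplies $\tau_1\in B_{k,K}$ with $v_1 = {\rm div}\,\tau_1$, whence $v = {\rm div}(\tau_1+\tau_2)$ with $\tau_1+\tau_2\in B_{k,K}+\Lambda_{k,k'}|_K = B_{k,k',h}|_K$, giving $v\in{\rm div}\,B_{k,k',h}|_K$.

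The heart of the matter is simply that the enrichment $\Lambda_{k,k'}$ is built from $H({\rm div})$ bubbles: its elements carry the factor $b=\lambda_1\lambda_2\lambda_3$ and hence vanish on $\partial K$, so they both satisfy the bubble condition and pair trivially against the rigid motions $R(K)$. I expect the only genuinely load-bearing check --- the ``main obstacle,'' though a mild one --- to be the consistency between the two enrichments, namely that the displacement enrichment added in \eqref{3-16} equals exactly ${\rm div}\,\Lambda_{k,k'}|_K$; once that matching is confirmed the two inclusions close immediately, with no dimension count required.
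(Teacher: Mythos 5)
Your double-inclusion argument is correct: the forward inclusion follows from the integration-by-parts identity above \eqref{3-9}, and the reverse inclusion correctly reduces to Lemma \ref{lem2.1} after splitting off the bubble enrichment, whose divergence pairs trivially with $R(K)$. The paper itself omits the proof, merely asserting it follows ``the same routines as in'' the Hu--Zhang reference, and your write-up is precisely the natural way to carry out that reduction, so it matches the intended approach.
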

%
%
	
	\begin{lem}\label{lem3.5}
		For any $v_h\in V_{k,k',h}$, there exists a $\tau_h\in {\Sigma}_{k,k',h}$ such that  
		$$\int_K ({\rm div}\tau_h - v_h)\cdot pdx=0  \quad  \forall p\in R(K), K\in\mathcal{T}_h$$
		and $$ \|\tau_h\|_{H({\rm div})}\lesssim \|v_h\|_0.$$
	\end{lem}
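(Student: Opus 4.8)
The plan is to transfer a continuous right inverse of the divergence into the discrete space via an $H^1$-stable quasi-interpolation, exploiting that rigid motions have vanishing symmetric gradient. First I would use the fact that, on a bounded polygon $\Omega$, the operator ${\rm div}:H^1(\Omega;\mathbb{S})\to L^2(\Omega;\mathbb{R}^2)$ is surjective with a bounded right inverse. Applying this to the given $v_h\in V_{k,k',h}\subset L^2(\Omega;\mathbb{R}^2)$ yields a symmetric tensor field $\sigma\in H^1(\Omega;\mathbb{S})$ with
\begin{equation*}
{\rm div}\,\sigma = v_h,\qquad \|\sigma\|_1\lesssim\|v_h\|_0 .
\end{equation*}
I would then set $\tau_h:=\Pi_h\sigma$, where $\Pi_h$ maps $H^1(\Omega;\mathbb{S})$ into the $H^1$-conforming component $\widetilde\Sigma_{k,h}\subset\Sigma_{k,k',h}$; note that no enrichment is needed here, since $\widetilde\Sigma_{k,h}\subset\widetilde\Sigma_{k,k',h}\subset\Sigma_{k,k',h}$.

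The operator $\Pi_h$ must enjoy two properties: (i) uniform $H^1$-stability, $\|\Pi_h\sigma\|_1\lesssim\|\sigma\|_1$; and (ii) reproduction of the low-order normal moments on each edge, i.e.
\begin{equation*}
\int_E\bigl((\Pi_h\sigma)\nu_E-\sigma\nu_E\bigr)\cdot q\,ds=0\qquad\forall q\in P_1(E;\mathbb{R}^2),\ \forall E .
\end{equation*}
Property (ii) is precisely what forces the rigid-motion test to vanish. Indeed, for $p\in R(K)$ one has $\nabla p+(\nabla p)^T=0$, so, since $\tau_h-\sigma$ is symmetric, integration by parts on each $K$ gives
\begin{equation*}
\int_K{\rm div}(\tau_h-\sigma)\cdot p\,dx
=-\int_K(\tau_h-\sigma):\tfrac{1}{2}\bigl(\nabla p+(\nabla p)^T\bigr)\,dx+\int_{\partial K}(\tau_h-\sigma)\nu\cdot p\,ds
=\int_{\partial K}(\tau_h-\sigma)\nu\cdot p\,ds .
\end{equation*}
On each edge $E$ the restriction $p|_E$ lies in $P_1(E;\mathbb{R}^2)$ and $\nu$ is constant, so the boundary term vanishes by (ii). Hence $\int_K({\rm div}\,\tau_h-v_h)\cdot p\,dx=\int_K{\rm div}(\tau_h-\sigma)\cdot p\,dx=0$ for all $p\in R(K)$, which is the asserted orthogonality.

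For the norm estimate, since $\tau_h\in\widetilde\Sigma_{k,h}\subset H^1(\Omega;\mathbb{S})$ I can control the $H({\rm div})$ norm by the full $H^1$ norm and then invoke the stability of $\Pi_h$ together with the right-inverse bound:
\begin{equation*}
\|\tau_h\|_{H({\rm div})}^2=\|\tau_h\|_0^2+\|{\rm div}\,\tau_h\|_0^2\lesssim\|\tau_h\|_1^2\lesssim\|\sigma\|_1^2\lesssim\|v_h\|_0^2 ,
\end{equation*}
which gives $\|\tau_h\|_{H({\rm div})}\lesssim\|v_h\|_0$ and completes the argument.

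The \emph{main obstacle} is the construction of $\Pi_h$ with properties (i) and (ii) simultaneously. Because $\sigma$ is merely in $H^1$ in two dimensions it need not be continuous, so nodal Lagrange interpolation is unavailable; I would instead use a Scott--Zhang / Cl\'ement-type averaging operator adapted to the symmetric-tensor degrees of freedom of $\widetilde\Sigma_{k,h}$, arranged so that the functionals it reproduces include the degree-$\le 1$ normal moments on every edge. Verifying $H^1$-stability together with exact reproduction of exactly those edge moments that control the rigid-motion directions is the technical crux; this is essentially the interpolation used in \cite{Hu2014Finite}, and since $\widetilde\Sigma_{k,k',h}\supseteq\widetilde\Sigma_{k,h}$ it applies verbatim here. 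Once $\Pi_h$ is in hand, the remaining steps above reduce to integration by parts and scaling.
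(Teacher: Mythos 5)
Your proposal is correct and follows essentially the same route as the paper, which itself only cites the routine of Hu's work \cite{Hu2014Finite}: take a bounded $H^1$ right inverse of ${\rm div}$ applied to $v_h$, then interpolate into the continuous part $\widetilde\Sigma_{k,h}\subset\Sigma_{k,k',h}$ with a Scott--Zhang-type operator that is $H^1$-stable and preserves the edge normal moments against $P_1$, so that integration by parts kills the rigid-motion tests. The technical point you flag --- the simultaneous stability and edge-moment reproduction of $\Pi_h$ --- is exactly the content of the construction in \cite{Hu2014Finite} (for $k\geq 3$ each edge carries enough interior Lagrange nodes in the $T_{E,j}^\bot$ directions to enforce these moments), so your argument is complete at the same level of detail as the paper's.
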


We are now in a position to show the existence and uniqueness result.	
\begin{thm}\label{thm3.0}
The mixed finite element scheme (\ref{discrete1}) with 
$\Sigma_{h}=\Sigma_{k,k',h} \ \text{ and }  \   V_h=V_{k,k',h}$ admits a unique solution  $(\sigma_h, u_h)\in \Sigma_{ h}\times {V}_{h}$.
	\end{thm}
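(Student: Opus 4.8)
The plan is to invoke the standard theory of mixed finite element methods \cite{Brezzi1974Existence,Brezzi1991Mixed}, which guarantees that the discrete problem (\ref{discrete1}) admits a unique solution once the two stability conditions (\ref{A1}) and (\ref{A2}) are verified for the pair $(\Sigma_{k,k',h}, V_{k,k',h})$. Thus the entire proof reduces to checking the K-ellipticity condition and the discrete inf-sup condition; the conclusion then follows at once from Brezzi's theorem.

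To verify the K-ellipticity condition (\ref{A1}), I would first use the inclusion ${\rm div}\,\Sigma_{k,k',h} \subset V_{k,k',h}$ noted right after (\ref{3-16}). For any $\tau_h \in Z_h$ the function $v = {\rm div}\,\tau_h$ is then an admissible test function in $V_{k,k',h}$, so the defining constraint of $Z_h$ forces $\|{\rm div}\,\tau_h\|_0^2 = ({\rm div}\,\tau_h, {\rm div}\,\tau_h) = 0$. Hence ${\rm div}\,\tau_h = 0$ and $\|\tau_h\|_{H({\rm div})} = \|\tau_h\|_0$ on $Z_h$, and the coercivity estimate $(\mathcal{A}\tau_h, \tau_h) \gtrsim \|\tau_h\|_0^2 = \|\tau_h\|_{H({\rm div})}^2$ follows from the pointwise positive definiteness of the compliance tensor $\mathcal{A}$, which is immediate from its explicit form with $\lambda, \mu > 0$.

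The main work lies in the inf-sup condition (\ref{A2}), which I would establish by constructing, for each given $v_h \in V_{k,k',h}$, a tensor $\tau_h \in \Sigma_{k,k',h}$ with ${\rm div}\,\tau_h = v_h$ and $\|\tau_h\|_{H({\rm div})} \lesssim \|v_h\|_0$. First, Lemma \ref{lem3.5} supplies $\tau_h^1 \in \Sigma_{k,k',h}$ such that ${\rm div}\,\tau_h^1 - v_h$ is $L^2(K)$-orthogonal to $R(K)$ on every element, with $\|\tau_h^1\|_{H({\rm div})} \lesssim \|v_h\|_0$. Consequently $g := v_h - {\rm div}\,\tau_h^1$ lies in $R_{k,k'}^\bot(K)$ on each $K$ and satisfies $\|g\|_0 \lesssim \|v_h\|_0$. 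By Lemma \ref{lem3.4} we have $R_{k,k'}^\bot(K) = {\rm div}\,B_{k,k',h}|_K$, so there exists a bubble tensor $\tau_h^2 \in B_{k,k',h}$ with ${\rm div}\,\tau_h^2 = g$ elementwise; since $\tau_h^2 \in \Sigma_{k,k',h}$ satisfies Assumption \ref{assum4.1}, Lemma \ref{lem3.3} yields $\|\tau_h^2\|_0 \lesssim h\|{\rm div}\,\tau_h^2\|_0$, whence $\|\tau_h^2\|_{H({\rm div})} \lesssim \|{\rm div}\,\tau_h^2\|_0 = \|g\|_0 \lesssim \|v_h\|_0$. Setting $\tau_h = \tau_h^1 + \tau_h^2$ gives ${\rm div}\,\tau_h = {\rm div}\,\tau_h^1 + g = v_h$ and $\|\tau_h\|_{H({\rm div})} \lesssim \|v_h\|_0$, so the supremum in (\ref{A2}) is bounded below by $\|v_h\|_0^2 / \|\tau_h\|_{H({\rm div})} \gtrsim \|v_h\|_0$, which is exactly (\ref{A2}).

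The step I expect to be the crux is the inf-sup verification, and within it the reduction to the bubble space: it rests essentially on the identity $R_{k,k'}^\bot(K) = {\rm div}\,B_{k,k',h}|_K$ of Lemma \ref{lem3.4} together with the scaled $H({\rm div})$ stability of the bubble correction from Lemma \ref{lem3.3}. The K-ellipticity argument, by contrast, is routine once ${\rm div}\,\Sigma_{k,k',h} \subset V_{k,k',h}$ is used to collapse $Z_h$ onto divergence-free tensors. One subtlety worth noting is that the constants hidden in $\lesssim$ may depend on $\lambda$ through $\mathcal{A}$ in the coercivity step; this is harmless for existence and uniqueness, though it would matter for any uniform, locking-free sharpening of the estimates.
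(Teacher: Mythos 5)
Your overall strategy coincides with the paper's: reduce the claim to the K-ellipticity and inf-sup conditions, obtain K-ellipticity from the inclusion ${\rm div}\,\Sigma_{k,k',h}\subset V_{k,k',h}$, and build the inf-sup test tensor as $\tau_h^1+\tau_h^2$ with $\tau_h^1$ supplied by Lemma \ref{lem3.5} and a bubble correction $\tau_h^2$ supplied by Lemma \ref{lem3.4}. The K-ellipticity part (which the paper leaves implicit) is spelled out correctly.

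There is, however, a genuine gap in the bubble step. You take \emph{some} preimage $\tau_h^2\in B_{k,k',h}$ of $g=v_h-{\rm div}\,\tau_h^1$ and assert that it ``satisfies Assumption \ref{assum4.1}'' so that Lemma \ref{lem3.3} applies. But Lemma \ref{lem3.4} only says that ${\rm div}$ maps the local bubble space \emph{onto} $R_{k,k'}^\bot(K)$; it is not injective there in general. Already for the unenriched Hu--Zhang bubble space one has $\dim B_{k,K}=\tfrac{3}{2}k(k-1)$ while $\dim R_k^\bot(K)=k(k+1)-3$, so $\ker({\rm div})\cap B_{k,K}$ has dimension $(k-2)(k-3)/2>0$ for $k\ge 4$, and the enrichment by $\Lambda_{k,k'}$ can only enlarge this kernel. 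An arbitrary preimage may therefore carry a large divergence-free bubble component, in which case the bound $\|\tau_h^2\|_0\lesssim h\|{\rm div}\,\tau_h^2\|_0$ fails outright and the control $\|\tau_h\|_{H({\rm div})}\lesssim\|v_h\|_0$ is lost. The paper closes exactly this hole by choosing $\tau_h^2$ as the minimal-$L^2$-norm solution of ${\rm div}\,\tau=g$ in $B_{k,k',h}$: minimality forces $\tau_h^2$ to be orthogonal to $\ker({\rm div})\cap B_{k,k',h}$, hence ${\rm div}\,\tau_h^2=0$ implies $\tau_h^2=0$, and only then does Lemma \ref{lem3.3} become applicable. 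With that one modification your argument is complete.
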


\begin{proof} It suffices to prove the K-ellipticity \eqref{A1} and the discrete BB inequality  \eqref{A2}. Note that 
	 \eqref{A1}  follows from the fact   ${\rm div}\Sigma_{k,k',h}\subset V_{k,k',h}$. 
	 
	 We follow a similar way in \cite{Hu2014Finite}   to show \eqref{A2}.
	 For any given $v_h\in V_{k,k',h}$, by Lemma \ref{lem3.5}, there exists  $\tau_1\in {\Sigma}_{k,k',h}$ with 
	$$\int_K ({\rm div}\tau_1-v_h)\cdot  pdx=0,\ \forall p\in R(K), \ K\in\mathcal{T}_h$$
	and $$ \|\tau_1\|_{H({\rm div})}\lesssim \|v_h\|_0.$$ 
	Then, by Lemma \ref{lem3.4} there exists  $\tau_2\in B_{k,k',h}$  with
	$${\rm div\tau_2}=v_h-{\rm div}\tau_1,\ \ \ \|\tau_2\|_0=\min\{\|\tau\|_0:\ {\rm div}\tau=v_h-{\rm div}\tau_1,\ \tau\in B_{k,k',h}\}.$$
	Thus, if ${\rm div}\tau_2=0$, then $\tau_2=0$, i.e.   $\tau_2$ satisfies  Assumption \ref{assum4.1}. Hence, by Lemma \ref{lem3.3} we have
	\begin{equation}
	\|\tau_2\|_{H({\rm div})}\lesssim \|v_h-{\rm div}\tau_1\|_0\lesssim  \|v_h\|_0.
	\end{equation}
	Finally, set $\tau_h:=\tau_1+\tau_2$, which implies that 
	\begin{equation}
	{\rm div}\tau_h=v_h \ \ \  {\text{and}}\ \ \|\tau_h\|_{H({\rm div})}\lesssim \|v_h\|_0.
	\end{equation}
	This means that the discrete BB inequality \eqref{A2}  holds.	
\end{proof}
	
	\begin{rem}\label{rem3.1}
		According to Lemma \ref{lem3.4} and \ref{lem3.5}, we can derive that there exists an interpolation $\Pi_h: H^1(\Omega;\mathbb{S})\mapsto \Sigma_{k,k',h}$ such that for any $\tau\in H^1(\Omega;\mathbb{S})$,
		$$({\rm div}(\tau-\Pi_h\tau),v_h)_K=0,\ \ \forall K\in\mathcal{T}_h,\ \forall {v_h} \in {V}_{k,k',h}. $$
		Furthermore, if $\tau\in H^{k+1}(\Omega;\mathbb{S})$, then
		\begin{equation}\label{4.11}
		{\left\| {\tau  - {\Pi _h}\tau } \right\|_0} \leq C{h^{k +1}}{\left\| \tau  \right\|_{k +1}}.
		\end{equation}
		This shows that the operator $\Pi_h: H^1(\Omega;\mathbb{S})\mapsto \Sigma_{k,k',h}$ has the following  commutative property:  
		\begin{equation}\label{commu}
		P_{h} {\rm div}\tau ={\rm div}\Pi_h\tau\quad \forall \tau\in H^1(\Omega;\mathbb{S}).
		\end{equation}
		Here $P_{h} : L^2(\Omega;\mathbb{R}^2)\mapsto V_{k,k',h}$ is the $L^2$ projection operator.
	\end{rem}

	By the stability conditions \eqref{A1}-\eqref{A2} and   Remark \ref{rem3.1},     we easily obtain the following error estimates.
	
\begin{thm}\label{thm3-1}
Let $(\sigma,u)\in \left(\Sigma\bigcap  H^{k+1}(\Omega;\mathbb{S})\right)\times \left(V\bigcap  H^k(\Omega;\mathbb{R}^2)\right)$ and $(\sigma_h,u_h)\in \Sigma_{h}\times V_{h}=\Sigma_{k,k',h}\times {V}_{k,k',h}$  solve (\ref{continuous}) and (\ref{discrete1}), respectively. Then it holds that
	\begin{equation}\label{3.10}
	{\left\| {\sigma  - {\sigma _h}} \right\|_{H({\rm{div}})}} + {\left\| {u - {u_h}} \right\|_0} \lesssim
	 {h^k}\left( {{{\left\| \sigma  \right\|}_{k + 1}} + {{\left\| u \right\|}_k}} \right)
	\end{equation}
	and
	\begin{equation}\label{3.11}
	{\left\| {\sigma  - {\sigma _h}} \right\|_0} \lesssim {h^{k +1 }}{\left\| \sigma  \right\|_{k+1}}.
	\end{equation}
\end{thm}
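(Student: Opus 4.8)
The plan is to follow the classical Brezzi error framework for mixed methods, exploiting the two stability conditions \eqref{A1}--\eqref{A2} established in Theorem \ref{thm3.0} together with the commuting interpolation $\Pi_h$ and the $L^2$ projection $P_h$ from Remark \ref{rem3.1}. First I would subtract \eqref{discrete1} from \eqref{continuous} to obtain the error equations
\begin{equation*}
(\mathcal{A}(\sigma-\sigma_h),\tau_h)+({\rm div}\tau_h,u-u_h)=0\quad\forall\tau_h\in\Sigma_{k,k',h},\qquad ({\rm div}(\sigma-\sigma_h),v_h)=0\quad\forall v_h\in V_{k,k',h}.
\end{equation*}
The standard splittings $\sigma-\sigma_h=(\sigma-\Pi_h\sigma)+(\Pi_h\sigma-\sigma_h)$ and $u-u_h=(u-P_hu)+(P_hu-u_h)$ then reduce everything to bounding the two discrete differences $\Pi_h\sigma-\sigma_h$ and $P_hu-u_h$, since the interpolation errors $\|\sigma-\Pi_h\sigma\|_0$ and $\|u-P_hu\|_0$ are controlled by \eqref{4.11} and standard $L^2$-projection estimates using $P_{k-1}\subset V_{k,k',h}$.

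For the stress, the crucial observation is that the commuting property \eqref{commu} forces ${\rm div}\sigma_h=P_h{\rm div}\sigma={\rm div}\Pi_h\sigma$, so that $\Pi_h\sigma-\sigma_h$ lies in the divergence-free subspace $Z_h$. Choosing $\tau_h=\sigma_h-\Pi_h\sigma$ in the first error equation annihilates the ${\rm div}\tau_h$ term, whence the first equation gives $(\mathcal{A}(\sigma_h-\Pi_h\sigma),\tau_h)=(\mathcal{A}(\sigma-\Pi_h\sigma),\tau_h)$, and the K-ellipticity \eqref{A1} yields
\begin{equation*}
\|\sigma_h-\Pi_h\sigma\|_{H({\rm div})}^2\lesssim(\mathcal{A}(\sigma_h-\Pi_h\sigma),\sigma_h-\Pi_h\sigma)=(\mathcal{A}(\sigma-\Pi_h\sigma),\sigma_h-\Pi_h\sigma)\lesssim\|\sigma-\Pi_h\sigma\|_0\,\|\sigma_h-\Pi_h\sigma\|_{H({\rm div})}.
\end{equation*}
Hence $\|\sigma_h-\Pi_h\sigma\|_{H({\rm div})}\lesssim\|\sigma-\Pi_h\sigma\|_0\lesssim h^{k+1}\|\sigma\|_{k+1}$. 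A triangle inequality, combined with $\|{\rm div}(\sigma-\Pi_h\sigma)\|_0=\|{\rm div}\sigma-P_h{\rm div}\sigma\|_0\lesssim h^k\|\sigma\|_{k+1}$, then delivers the $H({\rm div})$ bound contained in \eqref{3.10}; moreover, discarding the divergence contribution gives the sharper $L^2$ estimate $\|\sigma-\sigma_h\|_0\lesssim h^{k+1}\|\sigma\|_{k+1}$, which is exactly \eqref{3.11}.

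For the displacement, since ${\rm div}\Sigma_{k,k',h}\subset V_{k,k',h}$ I would use $({\rm div}\tau_h,u-P_hu)=0$ to rewrite the first error equation as $({\rm div}\tau_h,P_hu-u_h)=-(\mathcal{A}(\sigma-\sigma_h),\tau_h)$. Applying the inf-sup condition \eqref{A2} to $w_h=P_hu-u_h$ and using the boundedness of $\mathcal{A}$ gives $\|P_hu-u_h\|_0\lesssim\|\sigma-\sigma_h\|_0\lesssim h^{k+1}\|\sigma\|_{k+1}$, and combining this with $\|u-P_hu\|_0\lesssim h^k\|u\|_k$ completes the displacement half of \eqref{3.10}.

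I expect this argument to be essentially mechanical once Remark \ref{rem3.1} is granted: the only genuinely delicate ingredient is the commuting relation \eqref{commu}, which is precisely what places $\Pi_h\sigma-\sigma_h$ in $Z_h$ and thereby makes K-ellipticity applicable to a difference that would otherwise be uncontrolled. All remaining estimates are routine interpolation and $L^2$-projection bounds, so the substantive work has already been absorbed into the construction of $\Pi_h$ and the verification of \eqref{A1}--\eqref{A2} carried out in Theorem \ref{thm3.0}.
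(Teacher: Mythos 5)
Your argument is correct and is precisely the standard Brezzi-type argument the paper invokes when it states that the estimates follow ``easily'' from the stability conditions \eqref{A1}--\eqref{A2} and Remark \ref{rem3.1}; the paper gives no further detail, and your use of the commuting property \eqref{commu} to place $\Pi_h\sigma-\sigma_h$ in the divergence-free subspace, followed by K-ellipticity for the stress and the inf-sup condition for the displacement, is exactly the intended route. No gaps.
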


\section{Mass lumping mixed finite element method}

\subsection{Mass lumping scheme}
As   mentioned before, the mixed scheme (\ref{discrete1}) leads to an algebraic system of saddle point type. One approach to address this issue is applying mass lumping.  

The mass lumping scheme for  (\ref{discrete1}) is described as follows:  Find $(\sigma_h,u_h)\in {\Sigma}_{k,k',h} \times V_{k,k',h}$, such that
\begin{equation}\label{discrete2}
\left\{ {\begin{array}{*{20}{c}}
	{(\mathcal{A}\sigma_h ,\tau_h )_h + ({\rm div}\tau ,u_h) = 0}&{\forall\tau_h  \in {\Sigma}_{k,k',h} }, \\
	{-({\rm div}\sigma_h,v_h) = (f,v_h)}&{\forall v_h \in V_{k,k',h}.}
	\end{array}} \right.
\end{equation}
Here $ ( \mathcal{A}\sigma_h ,\tau_h)_h : = \sum\limits_{K\in\mathcal{T}_h}( \mathcal{A}\sigma_h ,\tau_h)_{h,K}$ with  
$$( \mathcal{A}\sigma_h ,\tau_h)_{h,K}:=I_{k,k',K}(\mathcal{A}\sigma_h :\tau_h),$$ and $I_{k,k',K}$ is the quadrature operator in   (\ref{4.2})   satisfing 
the conditions (B1) and (B2).

The following lemma  shows that  the quadrature rule (\ref{4.2}) produces a coercive bilinear form $( \cdot,\cdot)_h$.
\begin{lem}\label{lem4.2}
	It holds that
	\begin{equation}\label{coercivity-2}
	(\mathcal{A}\tau,\tau)_h\gtrsim \left\|\tau\right\|_0^2 \ \ \ \forall \tau \in {\Sigma}_{k,k',h}.
	\end{equation}
\end{lem}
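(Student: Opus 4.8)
The plan is to decouple the statement into two ingredients that combine multiplicatively: the \emph{pointwise} positive-definiteness of the compliance tensor $\mathcal{A}$, and a \emph{norm equivalence} on the finite element space between the $L^2$ norm and the discrete seminorm generated by the lumping quadrature. Since the lumped form $(\mathcal{A}\tau,\tau)_h$ is, by construction, a positively weighted sum of nodal values of $\mathcal{A}\tau:\tau$, once both ingredients are in hand the estimate follows at once.

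First I would record that $\mathcal{A}$ is uniformly coercive pointwise: for any symmetric $2\times 2$ matrix $\tau$ one has $(\text{tr}\,\tau)^2\le 2\,\tau:\tau$, whence
\[
\mathcal{A}\tau:\tau=\frac{1}{2\mu}\Big(\tau:\tau-\frac{\lambda}{2\mu+2\lambda}(\text{tr}\,\tau)^2\Big)\ge\frac{1}{2(\mu+\lambda)}\,\tau:\tau,
\]
so that at every quadrature node $(\mathcal{A}\tau:\tau)(\chi_i)\gtrsim |\tau(\chi_i)|^2$ (with $|\cdot|$ the Frobenius norm) with a constant depending only on $\mu,\lambda$. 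Using the strict positivity of the weights, condition (B1), I would then estimate on each $K\in\mathcal{T}_h$
\[
(\mathcal{A}\tau,\tau)_{h,K}=\sum_{i=1}^{r}w_i\,(\mathcal{A}\tau:\tau)(\chi_i)\gtrsim\sum_{i=1}^{r}w_i\,|\tau(\chi_i)|^2,
\]
so it remains to bound the last sum from below by $\|\tau\|_{0,K}^2$.

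The crux is this local norm equivalence, which I would establish by scaling to the reference element $\hat K$. Because the space $P_{k,k'}$ is affine invariant (the barycentric bubble $b$ and the full polynomial factor $P_{k'-3}$ both pull back to their reference analogues), the plain pull-back $\hat\tau:=\tau\circ F_K$ lies in $P_{k,k'}(\hat K;\mathbb{S})$; moreover the nodes and weights transform as $\chi_i=F_K(\hat\chi_i)$ and $w_i=|\det B_K|\,\hat w_i$, so both $\sum_i w_i|\tau(\chi_i)|^2$ and $\|\tau\|_{0,K}^2$ carry the same factor $|\det B_K|$. This reduces everything to the single reference inequality $\sum_i \hat w_i|\hat\tau(\hat\chi_i)|^2\gtrsim\|\hat\tau\|_{0,\hat K}^2$ on the finite-dimensional space $P_{k,k'}(\hat K;\mathbb{S})$.

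The main obstacle, and the only place where the specific structure of the $P_{k,k'}$-element enters, is to verify that the left-hand side is genuinely a norm on $P_{k,k'}(\hat K;\mathbb{S})$. Since all $\hat w_i>0$ by (B1), the quantity vanishes precisely when $\hat\tau$ vanishes at every node $\hat\chi_i$; and because $\{\hat\chi_i\}$ is a unisolvent set of nodal degrees of freedom for the $P_{k,k'}$-Lagrange element, this forces $\hat\tau\equiv 0$ componentwise. Hence $\hat\tau\mapsto(\sum_i\hat w_i|\hat\tau(\hat\chi_i)|^2)^{1/2}$ is a norm on a finite-dimensional space, and therefore dominates $\|\cdot\|_{0,\hat K}$ with a constant depending only on $\hat K,k,k'$. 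Scaling back and summing over $K$ yields $(\mathcal{A}\tau,\tau)_h\gtrsim\sum_K\|\tau\|_{0,K}^2=\|\tau\|_0^2$. I would finally remark that only the positivity (B1) is used here; the exactness condition (B2) plays no role in the coercivity.
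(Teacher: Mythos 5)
Your proof is correct and follows essentially the same route as the paper: the lumping identity $I_{k,k',K}(\zeta_i\zeta_s)=w_i\delta_{is}$ reduces the form to a positively weighted sum of nodal values $\mathcal{A}\tau(\chi_i):\tau(\chi_i)$, the pointwise coercivity of $\mathcal{A}$ bounds this below by $\sum_i w_i|\tau(\chi_i)|^2$, and a scaling/unisolvence argument gives equivalence with $\|\tau\|_{0,K}^2$. You merely make explicit the reference-element norm equivalence that the paper leaves implicit (and in doing so avoid the spurious $h^2$ factor appearing in the paper's intermediate inequalities), so no substantive difference.
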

\begin{proof}
	Recall that  $\left\{\chi_i\right\}_{i=1}^r$ are the nodes for the $P_{k,k'}$-Lagrange element, and $\left\{\zeta_i\right\}_{i=1}^r$ are the corresponding nodal basis functions satisfying \eqref{3-5}. Then, for any $\tau\in{\Sigma}_{k,k',h}$ we can denote $$\left.\tau\right|_K  = \sum\limits_{i = 1}^r\sum\limits_{j = 1}^3  {{c_{ij}}{\zeta _i}{T_j}},$$
    where $T_j\in \{\mathbb{T}_1,\mathbb{T}_2,\mathbb{T}_3\}$ if $\chi_m$ is a vertex or a node inside $K$, and $T_j\in \{T_E, T_{E,1}^\bot,T_{E,2}^\bot\}$ if $\chi_m$ is a node on edge $E$ (not the vertex). Thus,
\begin{eqnarray*}
 (\mathcal{A}\tau ,\tau )_{h,K} &=&  \left( \sum\limits_{i = 1}^r\left(\sum\limits_{j=1}^3 c_{ij}\mathcal{A}T_j\right)\zeta_i, \sum\limits_{s = 1}^r\left(\sum\limits_{t=1}^3 c_{st}T_t\right)\zeta_s\right)_{h,K} \\
 & =& \sum\limits_{i = 1}^r\sum\limits_{s = 1}^rI_{k,k',K}\left( (\zeta_i\sum\limits_{j=1}^3 c_{ij}\mathcal{A}T_j ): (\zeta_s\sum\limits_{t=1}^3 c_{st}T_t)\right)  \\
 &=& \sum\limits_{i = 1}^r w_i\left(\sum\limits_{j=1}^3 c_{ij}\mathcal{A}T_j : \sum\limits_{t=1}^3 c_{it}T_t\right)  \\
& \gtrsim& {\sum\limits_{i = 1}^r{w_i}\sum\limits_{j = 1}^3 {c_{ij}^2} }  
  \gtrsim  h^2\|\tau\|_{0,K}^2 ,
\end{eqnarray*}
where $w_i$ are the weights in (\ref{4.2}). As  a result, 
$$(\mathcal{A}\tau ,\tau )_{h} \gtrsim \sum\limits_{K\in\mathcal{T}_h} h^2\|\tau\|_{0,K}^2\gtrsim \left\|\tau\right\|_0^2,$$
which completes the proof.
\end{proof}

This coercivity  lemma, together with the discrete BB condition \eqref{A2}, yields the following conclusion.
\begin{lem}
	The mass lumping scheme   (\ref{discrete2}) admits  a unique solution.
\end{lem}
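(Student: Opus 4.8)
The plan is to verify the two discrete stability conditions of Brezzi for the mass-lumped scheme \eqref{discrete2} and then invoke the standard theory of mixed methods. The crucial observation is that replacing the exact bilinear form $(\mathcal{A}\cdot,\cdot)$ by its quadrature version $(\mathcal{A}\cdot,\cdot)_h$ alters only the coercivity ingredient: the off-diagonal coupling $({\rm div}\tau_h,v_h)$ and the spaces $\Sigma_{k,k',h},\,V_{k,k',h}$ are untouched, so the discrete BB inequality \eqref{A2} established in the proof of Theorem \ref{thm3.0} carries over verbatim, with no need for reproof.

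First I would establish K-ellipticity of $(\mathcal{A}\cdot,\cdot)_h$ on the discrete kernel $Z_h=\{\tau_h\in\Sigma_{k,k',h}:({\rm div}\tau_h,v_h)=0\ \forall v_h\in V_{k,k',h}\}$. Since ${\rm div}\,\Sigma_{k,k',h}\subset V_{k,k',h}$, the choice $v_h={\rm div}\tau_h$ in the defining relation of $Z_h$ forces ${\rm div}\tau_h=0$, so that $\|\tau_h\|_{H({\rm div})}^2=\|\tau_h\|_0^2$ on $Z_h$. Combining this with the coercivity of Lemma \ref{lem4.2} upgrades the $L^2$-coercivity to the full $H({\rm div})$-coercivity $(\mathcal{A}\tau_h,\tau_h)_h\gtrsim\|\tau_h\|_{H({\rm div})}^2$ on $Z_h$, which is precisely the K-ellipticity condition \eqref{A1} for the lumped form.

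With both conditions in hand, existence and uniqueness follow from the discrete Brezzi theorem. Equivalently, since the trial and test spaces coincide and \eqref{discrete2} is therefore a square finite-dimensional system, it suffices to prove uniqueness, which I would do directly: setting $f=0$, the choice $v_h={\rm div}\sigma_h$ in the second equation yields ${\rm div}\sigma_h=0$; then $\tau_h=\sigma_h$ in the first equation makes the coupling term vanish and leaves $(\mathcal{A}\sigma_h,\sigma_h)_h=0$, whence $\sigma_h=0$ by Lemma \ref{lem4.2}; finally the first equation reduces to $({\rm div}\tau_h,u_h)=0$ for all $\tau_h\in\Sigma_{k,k',h}$, and \eqref{A2} forces $u_h=0$.

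There is no substantial obstacle here: all the machinery is already assembled in Lemma \ref{lem4.2} and Theorem \ref{thm3.0}. The only points deserving care are (i) noting that the inf-sup condition is independent of the stress-space bilinear form and hence transfers unchanged to the lumped scheme, and (ii) confirming that $(\mathcal{A}\cdot,\cdot)_h$ is genuinely coercive rather than merely positive semidefinite---exactly the content of Lemma \ref{lem4.2}, which itself rests on the strict positivity of the quadrature weights guaranteed by condition (B1).
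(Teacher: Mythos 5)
Your proof is correct and follows essentially the same route as the paper: the paper also deduces this lemma directly from the coercivity of the lumped form (Lemma \ref{lem4.2}) together with the discrete BB condition \eqref{A2}, which is unaffected by the quadrature. Your additional observations---that $\mathrm{div}\,\Sigma_{k,k',h}\subset V_{k,k',h}$ upgrades the $L^2$-coercivity to $H(\mathrm{div})$-coercivity on the discrete kernel, and the direct uniqueness argument for the square system---simply spell out the details the paper leaves implicit.
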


\subsection{Error estimation}


 
In light of the stability conditions \eqref{coercivity-2} and \eqref{A2} and standard techniques, we easily derive the following result.
\begin{lem}\label{lem4.3}
	Let $(\sigma,u)\in{\Sigma}\times V$ and $(\sigma_h,u_h)\in{\Sigma}_{k,k',h}\times V_{k,k',h}$ be the solutions of (\ref{continuous}) and  (\ref{discrete2}), respectively. Then  
	{\small \begin{equation}                                                                                                                                                                         
	\begin{array}{rl}
	{\left\| {\sigma  - {\sigma _h}} \right\|_{H({\rm{div}})}} + {\left\| {u - {u_h}} \right\|_0} \lesssim   \left\| u-P_hu\right\|_0+ \inf\limits_{\tilde{\tau}_h\in{\Sigma}_{k,k',h}} \left( {{{\left\| {\sigma  - \tilde{\tau}_h} \right\|}_{H({\rm div})}} + \mathop {\sup }\limits_{\tau_h  \in {\Sigma} _{k,k',h} } \frac{{ {{E_h}(\tilde{\tau}_h,\tau_h )} }}{{{{\left\| \tau_h  \right\|}_{H({\rm{div}})}}}}} \right),
	\end{array}
	\end{equation}	}
	where  $P_{h} : L^2(\Omega;\mathbb{R}^2)\mapsto V_{k,k',h}$ is the $L^2$ projection operator, and 
	 \begin{equation}\label{4-23}
E_h(\tilde{\tau}_h,\tau_h):=(\mathcal{A}\tilde{\tau}_h, \tau_h)- (\mathcal{A}\tilde{\tau}_h,\tau_h)_h = \sum\limits_{K\in\mathcal{T}_h}\left(\int_K\mathcal{A}\tilde{\tau}_h:\tau_h dx-I_{k,k',K} (\mathcal{A}\tilde{\tau}_h: \tau_h)  \right).
\end{equation}

 \end{lem}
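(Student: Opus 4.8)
The plan is to prove this Strang-type estimate by comparing the lumped scheme (\ref{discrete2}) with the exact weak problem (\ref{continuous}) through the consistency functional $E_h$, exploiting the coercivity of $(\cdot,\cdot)_h$ (Lemma \ref{lem4.2}) and the discrete BB condition (\ref{A2}). First I would fix an arbitrary $\tilde{\tau}_h\in\Sigma_{k,k',h}$ and derive the error equations. Since $\Sigma_{k,k',h}\subset\Sigma$ and $V_{k,k',h}\subset V$, subtracting the first equation of (\ref{discrete2}) from the first equation of (\ref{continuous}) tested against $\tau_h\in\Sigma_{k,k',h}$, and inserting and removing the exact form $(\mathcal{A}\tilde{\tau}_h,\tau_h)$, yields
\begin{equation*}
(\mathcal{A}\xi_h,\tau_h)_h = (\mathcal{A}(\sigma-\tilde{\tau}_h),\tau_h) + E_h(\tilde{\tau}_h,\tau_h) + ({\rm div}\tau_h, u-u_h),\qquad\forall\tau_h\in\Sigma_{k,k',h},
\end{equation*}
where $\xi_h:=\sigma_h-\tilde{\tau}_h$. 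The second equations give the Galerkin orthogonality $({\rm div}(\sigma-\sigma_h),v_h)=0$ for all $v_h\in V_{k,k',h}$.

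Because ${\rm div}\Sigma_{k,k',h}\subset V_{k,k',h}$, we have ${\rm div}\xi_h\in V_{k,k',h}$, so the orthogonality identifies ${\rm div}\xi_h$ with the $L^2$-projection of ${\rm div}(\sigma-\tilde{\tau}_h)$ onto $V_{k,k',h}$, whence $\|{\rm div}\xi_h\|_0\le\|\sigma-\tilde{\tau}_h\|_{H({\rm div})}$. Next I set $\eta_h:=P_h u-u_h\in V_{k,k',h}$ and estimate it by the inf-sup condition. Using ${\rm div}\tau_h\in V_{k,k',h}$ one gets $({\rm div}\tau_h,u-u_h)=({\rm div}\tau_h,\eta_h)$, and the first equations of (\ref{continuous}) and (\ref{discrete2}) let me rewrite $({\rm div}\tau_h,\eta_h)=-(\mathcal{A}(\sigma-\tilde{\tau}_h),\tau_h)+(\mathcal{A}\xi_h,\tau_h)_h-E_h(\tilde{\tau}_h,\tau_h)$. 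Bounding the three terms by the continuity of $(\cdot,\cdot)$, the continuity of $(\cdot,\cdot)_h$, and the definition of $E_h$, condition (\ref{A2}) then gives
\begin{equation*}
\|\eta_h\|_0\lesssim \|\sigma-\tilde{\tau}_h\|_0+\|\xi_h\|_0+S,\qquad S:=\sup_{\tau_h\in\Sigma_{k,k',h}}\frac{E_h(\tilde{\tau}_h,\tau_h)}{\|\tau_h\|_{H({\rm div})}}.
\end{equation*}

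To control $\|\xi_h\|_0$ I would test the error equation with $\tau_h=\xi_h$ and apply Lemma \ref{lem4.2}, so $\|\xi_h\|_0^2\lesssim(\mathcal{A}\xi_h,\xi_h)_h$. The crucial simplification $({\rm div}\xi_h,u-u_h)=({\rm div}\xi_h,\eta_h)$ uses that ${\rm div}\xi_h\in V_{k,k',h}$ is $L^2$-orthogonal to $u-P_hu$. Bounding the remaining terms by $\|{\rm div}\xi_h\|_0\le\|\sigma-\tilde{\tau}_h\|_{H({\rm div})}$ and by $S$, and substituting the inf-sup estimate for $\|\eta_h\|_0$, produces a quadratic inequality of the form $\|\xi_h\|_0^2\lesssim(\delta+S)\|\xi_h\|_0+(\delta+S)^2$ with $\delta:=\|\sigma-\tilde{\tau}_h\|_{H({\rm div})}$. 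Young's inequality gives $\|\xi_h\|_0\lesssim\delta+S$, hence also $\|\xi_h\|_{H({\rm div})}\lesssim\delta+S$ and $\|\eta_h\|_0\lesssim\delta+S$. The triangle inequalities $\|\sigma-\sigma_h\|_{H({\rm div})}\le\delta+\|\xi_h\|_{H({\rm div})}$ and $\|u-u_h\|_0\le\|u-P_hu\|_0+\|\eta_h\|_0$, followed by taking the infimum over $\tilde{\tau}_h$, then deliver the claimed bound.

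The main obstacle is the coupling between the stress error $\xi_h$ and the displacement error $\eta_h$: the coercivity estimate for $\xi_h$ feeds on $\eta_h$ through $({\rm div}\xi_h,\eta_h)$, while the inf-sup estimate for $\eta_h$ feeds on $\xi_h$. Closing this loop cleanly requires that the cross term carry the already-controlled factor $\|{\rm div}\xi_h\|_0\le\delta$, so that substituting the $\eta_h$-bound produces only $(\delta+S)\|\xi_h\|_0$ plus lower-order terms before $\|\xi_h\|_0$ is absorbed by Young's inequality. A secondary point not to overlook is the continuity (upper bound) of the lumped form $(\cdot,\cdot)_h$, which is not stated explicitly but follows from the strictly positive weights of (B1) by the same node-by-node diagonalization used in the proof of Lemma \ref{lem4.2}.
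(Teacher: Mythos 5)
Your argument is correct and is exactly the ``standard technique'' the paper invokes without writing out: it combines the coercivity of the lumped form (Lemma \ref{lem4.2}), the discrete BB condition \eqref{A2}, the inclusion ${\rm div}\Sigma_{k,k',h}\subset V_{k,k',h}$ (which gives ${\rm div}\xi_h=P_h\,{\rm div}(\sigma-\tilde\tau_h)$ and lets $u-u_h$ be replaced by $P_hu-u_h$ in the cross terms), and a Strang-type treatment of the quadrature error $E_h$. The decoupling of the $\xi_h$--$\eta_h$ loop via Young's inequality and the final triangle inequalities are all in order, so the proposal matches the paper's intended proof.
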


Let ${ W}_h$ be a space 
 satisfying
$${\Sigma}_{k,h} \subseteq {W}_h\subseteq {\Sigma}_{k,k',h}$$
and consisting of piecewise polynomial tensors of  degree at most $\tilde{k}$,   $k\leq \tilde{k}\leq k'.$
Then we have the following estimate  for $E_h(\tilde{\tau}_h,{\tau}_h)$, 
which can be viewed as an
extended version of    \cite[Lemma 5.2]{Cohen2001Higher}.
\begin{lem}\label{lem4-2}
	If
	\begin{equation}\label{4-190}
	1\leq p\leq k-1+(k'-\tilde{k}),\ 0\leq q\leq k-1,
	\end{equation}
	then for all $(\tilde{\tau}_h,{\tau}_h)\in{\Sigma}_{k,k',h}\times { W}_h$, it holds
	\begin{equation}
	\left|E_h(\tilde{\tau}_h,{\tau}_h)\right|\lesssim h^{p+q}\left|\tilde{\tau}_h\right|_{p,h}\cdot \left|{\tau}_h\right|_{q,h}.
	\end{equation}
\end{lem}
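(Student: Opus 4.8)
The plan is to prove the estimate elementwise and then sum. For each $K\in\mathcal{T}_h$ I would write the local quadrature error as a bilinear functional
$E_K(\psi,\chi):=\int_K \psi:\chi\,dx - I_{k,k',K}(\psi:\chi)$, evaluated at $\psi=\mathcal{A}\tilde{\tau}_h$ and $\chi=\tau_h$, so that by \eqref{4-23} one has $E_h(\tilde{\tau}_h,\tau_h)=\sum_K E_K(\mathcal{A}\tilde{\tau}_h,\tau_h)$. Then I would pull everything back to the reference element $\hat K$ through the \emph{component-wise} affine composition $\hat\psi=\psi\circ F_K$, $\hat\chi=\chi\circ F_K$ (not the Piola map of \eqref{Piola}, which would not respect the contraction $:$). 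Since the physical weights satisfy $w_i=|\det B_K|\,\hat w_i$, a change of variables gives $E_K(\psi,\chi)=|\det B_K|\,\hat E(\hat\psi,\hat\chi)$, where $\hat E$ is the corresponding error form on $\hat K$.

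The heart of the argument is a double-orthogonality property of $\hat E$ coming from the exactness condition (B2), i.e. that \eqref{4.2} is exact on $P_{k+k'-2}$. Because $\tilde{\tau}_h|_K\in P_{k,k'}(K;\mathbb{S})\subset P_{k'}(K;\mathbb{S})$ and $\tau_h|_K\in P_{\tilde k}(K;\mathbb{S})$, the pulled-back components $\hat\psi_{ij}$ and $\hat\chi_{ij}$ have degrees at most $k'$ and $\tilde k$. Hence $\hat E(\hat r,\hat\chi_{ij})=0$ whenever $\deg\hat r+\tilde k\le k+k'-2$, that is for every $\hat r\in P_{p-1}$ with $p\le k-1+(k'-\tilde k)$; likewise $\hat E(\hat\psi_{ij},\hat s)=0$ whenever $k'+\deg\hat s\le k+k'-2$, that is for every $\hat s\in P_{q-1}$ with $q\le k-1$. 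This is exactly where the two ranges in \eqref{4-190} are forced, and pinning them down is the main obstacle: one must keep track of which argument carries degree $k'$ and which carries degree $\tilde k$, since the asymmetry between the bounds on $p$ and $q$ stems precisely from $\tilde k\le k'$. Using bilinearity and both orthogonalities, $\hat E(\hat\psi_{ij},\hat\chi_{ij})=\hat E(\hat\psi_{ij}-\hat r,\hat\chi_{ij}-\hat s)$ for any such $\hat r,\hat s$; as $\hat E$ is a continuous bilinear form on the finite-dimensional spaces $P_{k'}(\hat K)\times P_{\tilde k}(\hat K)$ (point values being controlled by $L^2$ norms there), I get $|\hat E(\hat\psi_{ij},\hat\chi_{ij})|\lesssim \|\hat\psi_{ij}-\hat r\|_{0,\hat K}\,\|\hat\chi_{ij}-\hat s\|_{0,\hat K}$. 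Taking the infimum over $\hat r\in P_{p-1}$ and $\hat s\in P_{q-1}$ and invoking the Bramble--Hilbert lemma yields $|\hat E(\hat\psi_{ij},\hat\chi_{ij})|\lesssim |\hat\psi_{ij}|_{p,\hat K}\,|\hat\chi_{ij}|_{q,\hat K}$ (the case $q=0$ being the trivial choice $\hat s=0$). Summing over the matrix components and applying Cauchy--Schwarz gives $|\hat E(\hat\psi,\hat\chi)|\lesssim |\hat\psi|_{p,\hat K}\,|\hat\chi|_{q,\hat K}$.

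Finally I would scale back. The affine scaling underlying \eqref{3-11}--\eqref{3-12}, together with $\|B_K\|\lesssim h$ from \eqref{4-1}, gives $|\hat\psi|_{p,\hat K}\lesssim h^{p}|\det B_K|^{-1/2}|\tilde{\tau}_h|_{p,K}$, where I use that $\mathcal{A}$ has constant coefficients so $|\mathcal{A}\tilde{\tau}_h|_{p,K}\lesssim|\tilde{\tau}_h|_{p,K}$, and similarly $|\hat\chi|_{q,\hat K}\lesssim h^{q}|\det B_K|^{-1/2}|\tau_h|_{q,K}$. Multiplying by the prefactor $|\det B_K|$ from $E_K=|\det B_K|\,\hat E$, the three Jacobian powers cancel exactly, leaving $|E_K(\mathcal{A}\tilde{\tau}_h,\tau_h)|\lesssim h^{p+q}|\tilde{\tau}_h|_{p,K}\,|\tau_h|_{q,K}$. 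Summing over $K\in\mathcal{T}_h$ and applying the discrete Cauchy--Schwarz inequality then produces $|E_h(\tilde{\tau}_h,\tau_h)|\lesssim h^{p+q}|\tilde{\tau}_h|_{p,h}\,|\tau_h|_{q,h}$, which is the claim. Beyond the degree bookkeeping flagged above, the only care needed is to work with the plain component-wise pullback rather than the Piola transform, so that $\mathcal{A}\tilde{\tau}_h:\tau_h$ factors cleanly into scalar products of components to which the exactness argument applies.
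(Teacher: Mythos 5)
Your proposal is correct and follows essentially the same route as the paper's proof: pull back to the reference element by the plain componentwise affine composition, use the exactness condition (B2) to replace both arguments of the quadrature-error form by their differences with $L^2$ projections onto $P_{p-1}$ and $P_{q-1}$ (the degree conditions $p-1+\tilde k\le k+k'-2$ and $q-1+k'\le k+k'-2$ being exactly the paper's), bound the form by $L^2$ norms on the finite-dimensional reference spaces, and scale back so the Jacobian factors cancel. Your explicit tracking of the constant-coefficient operator $\mathcal{A}$ and of the componentwise (non-Piola) pullback only makes explicit what the paper leaves implicit.
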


\begin{proof}
	
	For any $K=F_K(\hat{K})\in \mathcal{T}_h$ with $x=F_K(\hat x)$, we set
	$$ \widehat{{\tilde{\tau}_h}}(\hat{{ x}}) :=\tilde{\tau}_h(x)|_K ,\ \ \ \ \widehat{\tau_h}(\hat x) = \tau_h(x) |_K.$$
	By    scaling arguments we have
	\begin{equation}\label{4-14}
	\left|\widehat{\tilde{\tau}_h}\right|_{p,\hat{K}}\lesssim h^p\left|\det B_K\right|^{-\frac{1}{2}}\left|\tilde{\tau}_h\right|_{p,K},\ \ \ \left|\widehat{\tau_h}\right|_{p,\hat{K}}\lesssim h^p\left|\det B_K\right|^{-\frac{1}{2}}\left| {\tau}_h\right|_{p,K}.
	\end{equation}
	Then
	 \begin{equation}\label{Eh}
	\left| {{E_h}(\tilde{\tau}_h ,{\tau}_h )} \right| = \sum\limits_{K \in {\mathcal{T}_h}}^{} {\left| {{E_{h,K}}(\tilde{\tau}_h ,{\tau}_h )} \right|} 
	= \sum\limits_{K \in {\mathcal{T}_h}} {\left| {\det {B_K}} \right|\hat E_{h,\hat{K}}\left( {\widehat{\tilde{\tau}_h} ,\widehat{\tau_h} } \right)}  .
	\end{equation}
	From (\ref{4-190}) it follows
	\begin{equation*}
	\begin{array}{l}
	0 \leq p - 1 + \tilde k \le k + k' - 2,\;\\
	0 \leq q - 1 + k' \le k + k' - 2,\;\\
	0\leq p - 1 + q - 1 \le k + k' - 2.
	\end{array}
	\end{equation*}
	Let $\hat{\Pi}_j$ denote the $L^2$ projection from $\hat L^2(\hat K;\mathbb{S})$ onto $\hat P_j(\hat{K};\mathbb{S})$. By (B2),  the quadrature rule (\ref{4.2})  is exact for $P_{k+k'-2}$. Thus,
	\begin{equation*}
	\begin{array}{l}
	\left| {\hat E_{h,\hat{K}}(\widehat{\tilde{\tau}_h} ,\widehat{\tau_h} )} \right| = \left| {\hat E_{h,\hat{K}}(\widehat{\tilde{\tau}_h}  - {\hat\Pi _{p - 1}}\widehat{\tilde{\tau}_h} ,\widehat{\tau_h}  - {\hat\Pi _{q - 1}}\widehat{\tau_h} )} \right|\\
	\lesssim{\left\| {\widehat{\tilde{\tau}_h}  - {\hat\Pi _{p - 1}}\widehat{\tilde{\tau}_h} } \right\|_{0,\hat K}} \cdot {\left\| {\widehat{\tau_h}  - {\hat\Pi _{q - 1}}\widehat{\tau_h} } \right\|_{0,\hat K}}\\
	\lesssim{\left| \widehat{\tilde{\tau}_h}  \right|_{p,\hat K}} \cdot {\left| \widehat{\tau_h}  \right|_{q,\hat K}} \lesssim{h^p}{\left| {\det {B_K}} \right|^{ - \frac{1}{2}}}{\left| \tilde{\tau}_h \right|_{p,K}} \cdot {h^q}{\left| {\det {B_K}} \right|^{ - \frac{1}{2}}} {\left| {\tau}_h  \right|_{q,K}}\ \ \ \ \ \ {\rm by}\ \eqref{4-14}\\
	\lesssim h^{p+q}\left|\det B_K\right|^{-1} |\tilde{\tau}_h|_{p,K}|\tau_h|_{q,K},
	\end{array}
	\end{equation*}
	which,together with \eqref{Eh},  yields the desired result.
\end{proof}

\begin{rem} If taking ${ W}_h={\Sigma} _{k,k',h}$ and $p=k-1, q=0$ in Lemma \ref{lem4-2}, then  we obtain
$$\left|E_h(\tilde{\tau}_h,{\tau}_h)\right|\lesssim h^{k-1}\left|\tilde{\tau}_h\right|_{k-1,h}\cdot \left|{\tau}_h\right|_{0,h}, \ \forall \tilde{\tau}_h,{\tau}_h\in {\Sigma} _{k,k',h}, $$
which yields 
\begin{equation}\label{subopt}
{\mathop {\sup }\limits_{{\tau _h} \in {{\Sigma} _{k,k',h}}} \frac{{  {{E_h}(\tilde{\tau}_h,{\tau _h})}  }}{{{{\left\| {{\tau _h}} \right\|}_{H({\rm{div}})}}}}}
 \lesssim{h^{k - 1}}{\left| \tilde{\tau}_h  \right|_{k - 1,h}}.
 \end{equation}
This inequality, together with Lemma \ref{lem4.3}, leads to an error estimate like
	\begin{equation}
	\begin{gathered}
	\left\|\sigma-\sigma_h\right\|_{H({\rm div})} + \left\|u-u_h\right\|_0\lesssim h^{k-1}\left(\left\|\sigma\right\|_k +\left\|u\right\|_{k-1}\right),
	\end{gathered}
	\end{equation}
provided that $\sigma\in H^k(\Omega,\mathbb{S})$ and $u\in H^{k-1}(\Omega,\mathbb{R}^2)$.
Note that	such an estimate is not optimal.  	
\end{rem}

In what follows we will apply a more elaborate analysis to get  a better estimate  for the consistency error 
 than \eqref{subopt}. 
To this end, we set, for any $K \in \mathcal{T}_h$,  
$${\Xi}_j:=b  P^{hom}_j(K; \mathbb{S}),\ \ k-2\leq j\leq k'-3.$$
Here we recall that $P_j^{hom}(K; \mathbb{S})$ denotes the set of homogeneous polynomial tensors of degree  $j$. 
On the reference element $\hat K$ with vertexes $(0,0), (1,0)$ and $(0,1)$,  the bubble function reads $\hat b=\hat{x}_1\hat{x}_2(1-\hat{x}_1- \hat{x}_2)$. Let $\left\{\hat\psi_i\right\}_{i=0}^{j}$ be the basis of the space $\hat{b}\hat{P}_{j}^{hom}(\hat{K};\mathbb R)$, then
\begin{equation*}\label{4-40}
\hat{\psi}_i=\hat{x}_1^i\hat x_2^{j-i}\hat b=\hat x_1^{i+1}\hat x_2^{j-i+1}(1-\hat x_1-\hat x_2), \ i=0,1,\cdots,j
\end{equation*}
and $${\Xi}_j=\text{span} \{\hat{\psi}_i\mathbb{T}_s: \ i=0,1,\cdots,j; \ s=1,2,3\}.$$

\begin{lem}\label{lem4-5}
	For any $j\geq 1$,      $\tau\in{\Xi}_j$  satisfies   Assumption \ref{assum4.1}.
\end{lem}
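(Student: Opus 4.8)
The plan is to reduce to the reference element and use the two-dimensional elasticity (Airy) complex: a symmetric, divergence-free, traction-free tensor is the (rotated) Hessian of a scalar potential that vanishes to high order on $\partial\hat K$, and a degree count in the homogeneous variable then forces that potential, and hence $\tau$, to vanish. The homogeneity built into $\Xi_j$ is essential here, since ${\rm div}$ is \emph{not} injective on the full bubble space.

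First I would reduce to the reference element $\hat K$. Since the Piola transform \eqref{Piola} satisfies ${\rm div}\tau=B_K\widehat{\rm div}\hat\tau$, the hypothesis ${\rm div}\tau|_K=0$ is equivalent to $\widehat{\rm div}\hat\tau=0$, and it suffices to show that any $\hat\tau\in\hat b\hat P_j^{hom}(\hat K;\mathbb S)$ with $\widehat{\rm div}\hat\tau=0$ vanishes. Note that every such $\hat\tau$ is a bubble: because $\hat b=\lambda_1\lambda_2\lambda_3$ vanishes on $\partial\hat K$, we have $\hat\tau\nu|_{\partial\hat K}=0$. Since $\hat K$ is simply connected, the Airy representation gives a scalar polynomial $\phi$ with
$$\hat\tau=\begin{pmatrix}\partial_{22}\phi & -\partial_{12}\phi\\ -\partial_{12}\phi & \partial_{11}\phi\end{pmatrix},$$
so the entries of $\hat\tau$ are exactly the rotated second derivatives of $\phi$. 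The traction-free condition $\hat\tau\nu=0$ on each edge forces $\nabla\phi$ to be constant along that edge; adjusting $\phi$ by an affine function (which changes neither $\hat\tau$ nor the Hessian), I may assume $\phi$ and $\nabla\phi$ vanish on all of $\partial\hat K$. A polynomial vanishing together with its gradient on each edge $\lambda_i=0$ is divisible by $\lambda_i^2$, whence $\phi=\hat b^2\rho$ for some polynomial $\rho$.

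Next I would improve this divisibility by one more factor using the membership $\hat\tau\in\hat b\hat P_j^{hom}$. Expanding the Hessian of $\hat b^2\rho$ by the product rule, every term carries a factor $\hat b$ except $2(\nabla\hat b\otimes\nabla\hat b)\rho$; restricting to the edge $\lambda_i=0$, where $\nabla\hat b=\lambda_j\lambda_k\nabla\lambda_i$ is a nonzero multiple of the constant normal $\nabla\lambda_i$, the divisibility of $\hat\tau$ by $\hat b$ forces $\rho|_{\lambda_i=0}=0$. Doing this on all three edges gives $\hat b\mid\rho$, so $\phi=\hat b^3\sigma$ for some polynomial $\sigma$. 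To finish, I use a degree count: since each entry of the Hessian of $\phi$ lies in $\hat b\hat P_j^{hom}(\hat K;\mathbb R)$, it has nonzero homogeneous parts only in degrees $j+2$ and $j+3$, so $\phi$ has nonzero homogeneous parts only in degrees $j+4$ and $j+5$. But $\hat b^3$ has nonzero homogeneous parts in every degree from $6$ to $9$; hence for $\sigma\neq0$ the product $\hat b^3\sigma$ would contain nonzero homogeneous components at degrees differing by at least $3$, contradicting the two-degree restriction. Therefore $\sigma=0$, $\phi$ is affine, $\hat\tau=0$, and the lemma follows.

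I expect the main obstacle to be the first step: making rigorous the passage from the divergence-free, traction-free tensor $\hat\tau$ to an Airy potential $\phi$ with $\phi=\hat b^2\rho$. This is exactly where the structure of the elasticity complex and the precise relation between the edge traction $\hat\tau\nu$ and the boundary trace of $(\phi,\nabla\phi)$ must be invoked carefully; the subsequent improvement to $\hat b^3$ and the degree-count contradiction are elementary bookkeeping once the representation is available. A more computational alternative would be to expand $\hat\tau=\sum_{i,s}c_{is}\hat\psi_i\mathbb T_s$ in the given basis and analyze $\widehat{\rm div}\hat\tau=0$ directly, but treating the three edges symmetrically is precisely what the barycentric bubble $\hat b$ accomplishes in the approach above.
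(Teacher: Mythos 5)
Your proof is correct, but it takes a genuinely different route from the paper's. The paper argues entirely by explicit computation in the monomial basis $\hat\psi_i=\hat x_1^{i+1}\hat x_2^{\,j-i+1}(1-\hat x_1-\hat x_2)$ of $\hat b\hat P_j^{hom}(\hat K;\mathbb{R})$: it first proves, via a coefficient recursion, that the $2(j+1)$ functions $\partial\hat\psi_i/\partial\hat x_1,\ \partial\hat\psi_i/\partial\hat x_2$ are linearly independent, and then reads off from the two components of $\widehat{\rm div}\hat\tau=0$ that all coefficients $c_{is}$ vanish --- precisely the ``computational alternative'' you set aside at the end. Your route instead goes through the two-dimensional elasticity (Airy) complex: the divergence-free symmetric polynomial tensor is the rotated Hessian of a polynomial potential $\phi$; the vanishing of $\hat\tau\nu$ on $\partial\hat K$ (automatic since $\hat b\mid\hat\tau$) lets you normalize $\phi=\hat b^2\rho$; the divisibility of $\hat\tau$ by $\hat b$ upgrades this to $\phi=\hat b^3\sigma$; and the observation that $\hat b$ has homogeneous parts only in degrees $2$ and $3$, while $\hat b^3$ spreads from degree $6$ to degree $9$, forces $\sigma=0$. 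I checked the delicate points and they hold: the constancy of $\nabla\phi$ on each edge propagates through the shared vertices to a single global affine normalization, and the restriction argument on each line $\lambda_i=0$ is a valid integral-domain argument. What your approach buys is structural insight --- it isolates the homogeneity of the enrichment as the reason ${\rm div}$ is injective on $\Xi_j$ even though it has a large kernel on the full bubble space $B_{k,K}$, and it would transfer to other homogeneous enrichments without redoing a basis computation. What the paper's approach buys is brevity and self-containedness: it needs only elementary linear algebra, whereas yours relies on the exactness of the polynomial Airy complex on a simply connected domain, which you would need to cite or verify for polynomial data.
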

\begin{proof} 
	We first show that the functions $\left\{\frac{\partial \hat{\psi_i}}{\partial \hat x_1},\ \frac{\partial \hat{\psi_i}}{\partial \hat x_2}\right\}_{i=0}^j$ are linear independent. It is easy to obtain
	\[\begin{array}{l}
	\frac{\partial \hat{\psi_i}}{\partial \hat x_1} = (i + 1)\hat x_1^i\hat x_2^{j - i + 1} - (i + 2)\hat x_1^{i + 1}\hat x_2^{j - i + 1} - (i + 1)\hat x_1^i\hat x_2^{j - i + 2},\\
	\frac{\partial \hat{\psi_i}}{\partial \hat x_2}= (j - i + 1)\hat x_1^{i + 1}\hat x_2^{j - i} - (j - i + 1)\hat x_1^{i + 2}\hat x_2^{j - i} - (j - i + 2)\hat x_1^{i + 1}\hat x_2^{j - i + 1}.
	\end{array}\]
	Suppose that there are constants $\{c_i\}_{i=0}^j,\ \{d_i\}_{i=0}^j$ such that
	\begin{equation*}
	\sum\limits_{i = 0}^j {{c_i}\frac{\partial \hat{\psi_i}}{\partial \hat x_1}}  + \sum\limits_{i = 0}^j {{d_i}{\frac{\partial \hat{\psi_i}}{\partial \hat x_2}}}  = 0,
	\end{equation*}
	which indicates, 
	for $0\leq i\leq j +1$,
	\begin{alignat}{1}
	&(i + 1){c_i} + (j - i +2 ){d_{i - 1}} = 0,\nonumber\\
	&(i + 1){c_i} + (i + 1){c_{i - 1}} + (j - i + 3){d_{i - 2}} + (j - i + 3){d_{i-1}} = 0.\nonumber
	\end{alignat}
	Here   we set $c_{-1}=d_{-1}=d_{-2}=c_{j+1}=0$. 
	Simple calculations show 
	%
	that  	$$c_i=d_i=0,\quad   i=0,1,\cdots j, $$ i.e.     $\left\{\frac{\partial \hat{\psi_i}}{\partial \hat x_1},\ \frac{\partial \hat{\psi_i}}{\partial \hat x_2}\right\}_{i=0}^j$ are linear independent.

	Second, for any $\hat{\tau}\in \hat{\Xi}_j$, there exist constants $c_{is}(i=0,1,\cdots,j;\ s=1,2,3)$, such that
	$$\hat{\tau}=\sum_{i=0}^{j}\sum_{s=1}^{3}c_{is}\hat{\psi_i}\mathbb{T}_s,$$
	which  means
	$$\widehat{\rm div}\hat{\tau}=	\left(\begin{array}{l}
	\sum\limits_{i=0}^{j}\left( c_{i1}\frac{\partial \hat{\psi_i}}{\partial \hat x_1} + c_{i3}\frac{\partial \hat{\psi_i}}{\partial \hat x_2}\right)\\
	\sum\limits_{i=0}^{j}\left( c_{i2}\frac{\partial \hat{\psi_i}}{\partial \hat x_2} + c_{i3}\frac{\partial \hat{\psi_i}}{\partial \hat x_1}\right)\\
	\end{array}\right).$$
	
	If $\widehat{\rm div}\hat{\tau}  =0$, then we get 
	$$\sum\limits_{i=0}^{j}\left( c_{i1}\frac{\partial \hat{\psi_i}}{\partial \hat x_1} + c_{i3}\frac{\partial \hat{\psi_i}}{\partial \hat x_2}\right)=0, \quad \sum\limits_{i=0}^{j}\left( c_{i2}\frac{\partial \hat{\psi_i}}{\partial \hat x_2} + c_{i3}\frac{\partial \hat{\psi_i}}{\partial \hat x_1}\right)=0.$$
 Thus,   from the  linear independence of $\left\{\frac{\partial \hat{\psi_i}}{\partial \hat x_1},\ \frac{\partial \hat{\psi_i}}{\partial \hat x_2}\right\}_{i=0}^j$
it follows
	$$c_{is}=0, \ \ i=0,1,\cdots, j, \ s=1,2,3,$$
i.e. 	  $\hat{\tau}=0$. This completes the proof.
\end{proof}

Thanks to Lemma \ref{lem4-5}, we can obtain the following estimate for  the consistency error.

\begin{lem}\label{lem4-6}
	For any $\tilde{\tau}_h\in{\Sigma}_{k,k',h}$, it holds
		\begin{equation}
		{\mathop {\sup }\limits_{{\tau_h} \in {{\Sigma} _{k,k',h}}} \frac{{\left| {{E_h}(\tilde{\tau}_h,{\tau _h})} \right|}}{{{{\left\| {{\tau _h}} \right\|}_{H({\rm{div}})}}}}}\lesssim h^k\left|\tilde{\tau}_h\right|_{k,h}.
		\end{equation}
\end{lem}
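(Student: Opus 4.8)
The plan is to exploit the direct-sum structure $\Sigma_{k,k',h}=\Sigma_{k,h}\oplus\Lambda_{k,k'}$ from \eqref{3-5.06} to split the test tensor and to treat the two pieces by different instances of Lemma \ref{lem4-2}. Write $\tau_h=\tau_h^0+\tau_h^1$ with $\tau_h^0\in\Sigma_{k,h}$ and $\tau_h^1\in\Lambda_{k,k'}$; since this is a fixed decomposition on the reference element, a scaling argument yields the $L^2$-stability $\|\tau_h^0\|_0+\|\tau_h^1\|_0\lesssim\|\tau_h\|_0$. By bilinearity $E_h(\tilde\tau_h,\tau_h)=E_h(\tilde\tau_h,\tau_h^0)+E_h(\tilde\tau_h,\tau_h^1)$, so it suffices to bound each term by $h^k|\tilde\tau_h|_{k,h}\|\tau_h\|_{H(\mathrm{div})}$.

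For the polynomial part $\tau_h^0$, of local degree at most $k\le k'-1$, I would apply Lemma \ref{lem4-2} with $W_h=\Sigma_{k,h}$ (so $\tilde k=k$) and the admissible choice $p=k$, $q=0$ (allowed precisely because $k\le k-1+(k'-k)$ as $k'>k$). This gives $|E_h(\tilde\tau_h,\tau_h^0)|\lesssim h^k|\tilde\tau_h|_{k,h}\|\tau_h^0\|_0\lesssim h^k|\tilde\tau_h|_{k,h}\|\tau_h\|_{H(\mathrm{div})}$, already of the desired form; the improvement over \eqref{subopt} comes exactly from being able to raise $p$ to $k$, which is possible only when the second argument has degree $\le k'-1$. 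The same treatment in fact covers all bubble layers $\tau_h^{1,j}\in\Xi_j$ with $j\le k'-4$, since these have degree $\le k'-1$, so the genuine difficulty is confined to the single top layer $\Xi_{k'-3}$.

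For the top layer $\tau_h^{1,\mathrm{top}}\in\Xi_{k'-3}$ (degree $k'$), Lemma \ref{lem4-2} forces the exponent on $|\tilde\tau_h|$ down to $k-1$: with $W_h=\Sigma_{k,k',h}$, $p=k-1$, $q=0$ it gives $|E_h(\tilde\tau_h,\tau_h^{1,\mathrm{top}})|\lesssim h^{k-1}|\tilde\tau_h|_{k-1,h}\|\tau_h^{1,\mathrm{top}}\|_0$, which is only \eqref{subopt}. The extra power of $h$ must be extracted from the bubble itself, and this is where Lemma \ref{lem4-5} is decisive: because $\Xi_{k'-3}$ satisfies Assumption \ref{assum4.1}, Lemma \ref{lem3.3} applies and yields $\|\tau_h^{1,\mathrm{top}}\|_0\lesssim h\,|\tau_h^{1,\mathrm{top}}|_{1,h}\lesssim h\,\|\mathrm{div}\,\tau_h^{1,\mathrm{top}}\|_0$, supplying the missing $h$ and leaving $h^k|\tilde\tau_h|_{k-1,h}\|\mathrm{div}\,\tau_h^{1,\mathrm{top}}\|_0$.

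The step I expect to be the main obstacle is then controlling the divergence of the extracted bubble component, i.e. proving $\|\mathrm{div}\,\tau_h^1\|_0\lesssim\|\tau_h\|_{H(\mathrm{div})}$ (equivalently $\|\tau_h^1\|_0\lesssim h\|\tau_h\|_{H(\mathrm{div})}$). For bubbles one only has $\|\mathrm{div}\,\tau_h^1\|_0\simeq h^{-1}\|\tau_h^1\|_0$, so plain $L^2$-stability of the splitting is insufficient; what is needed is a genuine divergence-stability, preventing $\mathrm{div}\,\tau_h^1$ from being inflated by cancellation against $\mathrm{div}\,\tau_h^0$. The enabling structural fact is again the injectivity of $\mathrm{div}$ on $\Lambda_{k,k'}$ from Lemma \ref{lem4-5}, which guarantees that no nonzero bubble tensor is $\mathrm{div}$-free and hence pins the bubble component to its divergence. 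Granting this, the pieces combine to the stated bound; the only loose end is that the top layer couples to $|\tilde\tau_h|_{k-1,h}$ rather than $|\tilde\tau_h|_{k,h}$, a discrepancy that is harmless for the final $O(h^k)$ rate—both seminorms of $\tilde\tau_h=\Pi_h\sigma$ are $\lesssim\|\sigma\|_{k+1}$—and that I would absorb either by stating the combined estimate in terms of both seminorms or by invoking the regularity of $\sigma$ at the point of use in Lemma \ref{lem4.3}.
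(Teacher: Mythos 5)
Your proof follows essentially the same route as the paper: it splits the test tensor into the part of local degree at most $k'-1$ and the top bubble layer $\Xi_{k'-3}$, treats the former with Lemma~\ref{lem4-2} ($p=k$, $q=0$) and the latter with $p=k-1$ combined with Lemma~\ref{lem4-5} and Lemma~\ref{lem3.3} to recover the missing power of $h$ (the paper takes $q=1$ and then uses $|\tau_h|_{1,h}\lesssim\|{\rm div}\,\tau_h\|_0$, which is the same estimate applied in a different order). The divergence-stability of the splitting that you single out as the main obstacle is precisely the step the paper leaves implicit when it bounds the supremum over $\Sigma_{k,k',h}$ by the sum $M_1+M_2$ of suprema over the two summands, and likewise the $|\tilde{\tau}_h|_{k-1,h}$ versus $|\tilde{\tau}_h|_{k,h}$ mismatch you note already appears in the paper's bound for $M_2$.
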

\begin{proof} In view of the definition, \eqref{3-5.06}, of $\Sigma _{k,k',h}$,
	we have for any $\tilde \tau_h\in \Sigma _{k,k',h}$, 
	\[\mathop {\sup }\limits_{\tau_h  \in \Sigma _{k,k',h} } \frac{{{E_h}(\tilde \tau_h ,\tau_h )}}{{{{\left\| \tau_h  \right\|}_{H({\rm div})}}}} \le \mathop {\sup }\limits_{\tau_h  \in \Sigma _{k,k',h}\backslash\Xi_{k'-3}} \frac{{{E_h}(\tilde \tau_h ,\tau_h )}}{{{{\left\| \tau_h  \right\|}_{H({\rm div})}}}} + \mathop {\sup }\limits_{\tau_h  \in \Xi_{k'-3}} \frac{{{E_h}(\tilde \tau_h ,\tau_h )}}{{{{\left\| \tau_h  \right\|}_{H({\rm div})}}}}=: {M_1} + {M_2}.\]
	
	We first estimate $M_1$. Since the degree of polynomials contained in $\Sigma _{k,k',h}\backslash\Xi_{k'-3}$ is at most $k'-1$,   we can take $\tilde{k}=k'-1, p=k, q=0$ in Lemma \ref{lem4-2} to get
	\begin{equation}\label{4-36}
 {{M_1}}  = \mathop {\sup }\limits_{\tau_h  \in \Sigma _{k,k',h}\backslash\Xi_{k'-3}} \frac{{{E_h}(\tilde \tau_h ,\tau_h )}}{{{{\left\| \tau _h \right\|}_{H({\rm div})}}}} \lesssim 
 {h^k}{\left| \tilde \tau_h  \right|_{k,h}}.
	\end{equation}
	
	For $M_2$,  take
	$\tilde{k}=k',\ p=k-1, q=1$ in Lemma \ref{lem4-2},
	then by Lemma \ref{lem3.3} and Lemma \ref{lem4-5} we obtain
	\begin{eqnarray*}
 {{M_2}} &=& \mathop {\sup }\limits_{\tau_h  \in {\Xi _{k' - 3}}} \frac{{{E_h}({\tilde \tau_h},\tau_h )}}{{{{\left\| \tau_h  \right\|}_{H({\rm{div}})}}}} \lesssim \mathop {\sup }\limits_{\tau_h  \in \Xi _{k' - 3}} \frac{{{h^k}{{\left| {{\tilde \tau_h}} \right|}_{k - 1,h}}{{\left| \tau_h  \right|}_{1,h}}}}{{{{\left\|{\rm div} \tau_h  \right\|}_0}}}
 \lesssim \mathop {\sup }\limits_{\tau_h  \in \Xi _{k' - 3}} \frac{{{h^k}{{\left| {{\tilde \tau_h}} \right|}_{k - 1,h}}{{\left| \tau_h  \right|}_{1,h}}}}{{{{\left| \tau_h  \right|}_{1,h}}}} \\
& \lesssim &{h^k}{\left| {{\tilde \tau_h}} \right|_{k - 1,h}},
	\end{eqnarray*}
which, together with   (\ref{4-36}), yields the desired conclusion.
\end{proof}

Finally, combining  Lemma \ref{lem4-6} and Lemma \ref{lem4.3} immediately yields  the following optimal error estimate for the mass lumping mixed finite element scheme.

\begin{thm}\label{thm4.1}	
	Let $(\sigma,u)\in \left(\Sigma\bigcap  H^{k+1}(\Omega;\mathbb{S})\right)\times \left(V\bigcap  H^k(\Omega;\mathbb{R}^2)\right)$ and $(\sigma_h,u_h)\in{\Sigma}_{k,k',h}\times V_{k,k',h}$ be the solutions of  (\ref{continuous}) and (\ref{discrete2}), respectively. Then 
	\begin{equation}\label{4-35}
	\begin{gathered}
	\left\|\sigma-\sigma_h\right\|_{H({\rm div})} + \left\|u-u_h\right\|_0\lesssim h^{k}\left(\left\|\sigma\right\|_{k+1} +\left\|u\right\|_{k}\right).
	\end{gathered}
	\end{equation}
\end{thm}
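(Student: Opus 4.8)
The plan is to combine the two error-analysis ingredients already in place: the abstract mixed-method estimate of Lemma \ref{lem4.3}, which controls the total error by an approximation term plus a consistency (quadrature) term, and the refined consistency bound of Lemma \ref{lem4-6}. The heart of the matter is that mass lumping perturbs only the bilinear form $(\mathcal{A}\sigma_h,\tau_h)$, so the entire additional difficulty compared to the exact scheme in Theorem \ref{thm3-1} is concentrated in the supremum $\sup_{\tau_h}E_h(\tilde{\tau}_h,\tau_h)/\|\tau_h\|_{H({\rm div})}$. Everything else is inherited from the stability of the exact scheme, namely the coercivity \eqref{coercivity-2} (Lemma \ref{lem4.2}) and the discrete inf--sup condition \eqref{A2} (established in Theorem \ref{thm3.0}).

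First I would invoke Lemma \ref{lem4.3} to write
\begin{equation*}
{\left\| {\sigma  - {\sigma _h}} \right\|_{H({\rm{div}})}} + {\left\| {u - {u_h}} \right\|_0} \lesssim   \left\| u-P_hu\right\|_0+ \inf\limits_{\tilde{\tau}_h\in{\Sigma}_{k,k',h}} \left( {{{\left\| {\sigma  - \tilde{\tau}_h} \right\|}_{H({\rm div})}} + \mathop {\sup }\limits_{\tau_h  \in {\Sigma} _{k,k',h} } \frac{{ {{E_h}(\tilde{\tau}_h,\tau_h )} }}{{{{\left\| \tau_h  \right\|}_{H({\rm{div}})}}}}} \right).
\end{equation*}
The natural choice is $\tilde{\tau}_h = \Pi_h\sigma$, the interpolant from Remark \ref{rem3.1}. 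Then the approximation term is handled immediately: by \eqref{4.11} we have $\|\sigma-\Pi_h\sigma\|_0 \lesssim h^{k+1}\|\sigma\|_{k+1}$, and because $\Pi_h$ satisfies the commuting diagram \eqref{commu}, ${\rm div}(\sigma-\Pi_h\sigma) = (I-P_h){\rm div}\sigma$, which gives $\|{\rm div}(\sigma-\Pi_h\sigma)\|_0 \lesssim h^k\|{\rm div}\sigma\|_k \lesssim h^k\|\sigma\|_{k+1}$; hence $\|\sigma-\Pi_h\sigma\|_{H({\rm div})}\lesssim h^k\|\sigma\|_{k+1}$. Likewise the standard $L^2$-projection estimate yields $\|u-P_hu\|_0 \lesssim h^k\|u\|_k$, consistent with the target rate $h^k$.

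Next I would control the consistency term. Applying Lemma \ref{lem4-6} with $\tilde{\tau}_h=\Pi_h\sigma$ gives
\begin{equation*}
\mathop {\sup }\limits_{\tau_h  \in {\Sigma} _{k,k',h}} \frac{{\left|{E_h(\Pi_h\sigma,\tau_h)}\right|}}{{\left\|\tau_h\right\|_{H({\rm div})}}} \lesssim h^k\left|\Pi_h\sigma\right|_{k,h}.
\end{equation*}
It then remains to bound $|\Pi_h\sigma|_{k,h}$ by a quantity of order $\|\sigma\|_{k+1}$; this follows from the triangle inequality $|\Pi_h\sigma|_{k,h}\le |\Pi_h\sigma-\sigma|_{k,h}+|\sigma|_{k,h}$ together with the stability/approximation properties of $\Pi_h$, so that $|\Pi_h\sigma|_{k,h}\lesssim \|\sigma\|_{k+1}$. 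Combining this with the approximation estimate above yields the desired $h^k(\|\sigma\|_{k+1}+\|u\|_k)$ bound, which is \eqref{4-35}.

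The main obstacle I anticipate is the consistency estimate itself, but that work has already been discharged in Lemma \ref{lem4-6}: the essential subtlety there is the splitting $\Sigma_{k,k',h} = (\Sigma_{k,k',h}\setminus\Xi_{k'-3}) \oplus \Xi_{k'-3}$ and the separate treatment of the two pieces with different choices of $(p,q)$ in Lemma \ref{lem4-2}, where the highest-order bubble block $\Xi_{k'-3}$ is handled by exploiting Lemma \ref{lem4-5} (so that $|\tau_h|_{1,h}\simeq\|{\rm div}\tau_h\|_0$ on that subspace) to gain control of the denominator $\|\tau_h\|_{H({\rm div})}$. Given those lemmas, the proof of the theorem is essentially a clean assembly: the only point requiring a little care is to verify that the interpolation-operator bound $|\Pi_h\sigma|_{k,h}\lesssim\|\sigma\|_{k+1}$ is legitimate, which follows from standard scaling arguments analogous to those in Lemma \ref{lem3.3} and Remark \ref{rem3.1}.
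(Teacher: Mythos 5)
Your proposal is correct and follows the same route as the paper: the paper's proof of Theorem \ref{thm4.1} is precisely the one-line combination of Lemma \ref{lem4.3} with Lemma \ref{lem4-6}, and you have simply made explicit the standard details (choosing $\tilde{\tau}_h=\Pi_h\sigma$, using \eqref{4.11} and \eqref{commu} for the approximation term, and the bound $|\Pi_h\sigma|_{k,h}\lesssim\|\sigma\|_{k+1}$) that the paper leaves implicit in the word ``immediately.''
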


The following theorem shows that the optimal error estimate for the  stress in $L^2$ norm can be achieved for some special cases.
\begin{thm}
	Let $(\sigma_h,v_h)\in{\Sigma}_{k,k',h}\times V_{k,k',h}$ be the solution of (\ref{discrete2}) with $k'\geq k+2$. If the tensor functions contained in the space ${\Xi}_{k'-3}\bigcup {\Xi}_{k'-4}$ satisfy   Assumption \ref{assum4.1}, then 
	\begin{equation}\label{4-37}
	{\left\| {\sigma  - {\sigma _h}} \right\|_0} \lesssim{h^{k + 1}}{\left\| \sigma  \right\|_{k + 1}}.
	\end{equation}
\end{thm}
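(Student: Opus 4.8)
The plan is to follow the classical ``divergence-free error'' device for mixed methods: isolate the part of the stress error that is annihilated by the discrete divergence, which lets one trade the $H({\rm div})$-rate of Theorem \ref{thm4.1} for an extra power of $h$. First I would set $\delta_h := \Pi_h\sigma - \sigma_h \in \Sigma_{k,k',h}$, where $\Pi_h$ is the interpolation of Remark \ref{rem3.1}. Subtracting the second equation of \eqref{discrete2} from that of \eqref{continuous} tested against $V_{k,k',h}$ gives $({\rm div}(\sigma-\sigma_h),v_h)=0$ for all $v_h\in V_{k,k',h}$, hence ${\rm div}\sigma_h = P_h{\rm div}\sigma$; combined with the commuting property \eqref{commu}, ${\rm div}\Pi_h\sigma = P_h{\rm div}\sigma$, this yields ${\rm div}\delta_h = 0$. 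Testing the first equations of \eqref{continuous} and \eqref{discrete2} with the divergence-free $\delta_h$ kills the coupling terms, so $(\mathcal A\sigma,\delta_h)=0$ and $(\mathcal A\sigma_h,\delta_h)_h=0$. The coercivity Lemma \ref{lem4.2} then produces the key identity
\begin{equation*}
\|\delta_h\|_0^2 \lesssim (\mathcal A\delta_h,\delta_h)_h = (\mathcal A(\Pi_h\sigma-\sigma),\delta_h) - E_h(\Pi_h\sigma,\delta_h),
\end{equation*}
where $E_h$ is the consistency functional \eqref{4-23}.

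The first term on the right is routine: boundedness of $\mathcal A$ together with the interpolation estimate \eqref{4.11} give $(\mathcal A(\Pi_h\sigma-\sigma),\delta_h)\lesssim \|\Pi_h\sigma-\sigma\|_0\|\delta_h\|_0 \lesssim h^{k+1}\|\sigma\|_{k+1}\|\delta_h\|_0$. Everything therefore reduces to establishing the consistency bound
\begin{equation*}
|E_h(\Pi_h\sigma,\delta_h)| \lesssim h^{k+1}\|\sigma\|_{k+1}\|\delta_h\|_0,
\end{equation*}
which is one power of $h$ sharper than the generic estimate of Lemma \ref{lem4-6}; once it is in hand, dividing by $\|\delta_h\|_0$ and combining with $\|\sigma-\Pi_h\sigma\|_0\lesssim h^{k+1}\|\sigma\|_{k+1}$ through the triangle inequality closes the proof. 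The extra power must come from the divergence constraint ${\rm div}\delta_h=0$, the only structural information distinguishing $\delta_h$ from a generic test tensor.

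For the consistency bound I would exploit two facts. Because the quadrature is exact on $P_{k+k'-2}$ and $k'\ge k+2$, the functional $E_h$ vanishes whenever both arguments lie in $\Sigma_{k,h}$ (degree at most $k$, product degree at most $2k\le k+k'-2$); thus only the enrichment $\Lambda_{k,k'}$, and more precisely its two top levels $\Xi_{k'-3}$ and $\Xi_{k'-4}$, can contribute. Writing $\delta_h = \tau_0 + \tau_\Lambda$ with $\tau_0\in\Sigma_{k,h}$ and $\tau_\Lambda\in\Lambda_{k,k'}$, the constraint ${\rm div}\delta_h=0$ forces ${\rm div}\tau_\Lambda = -{\rm div}\tau_0$ to be a piecewise polynomial of degree at most $k-1$, so every homogeneous component of degree $\ge k$ in ${\rm div}\tau_\Lambda$ cancels, in particular the leading divergences of the $\Xi_{k'-3}$- and $\Xi_{k'-4}$-parts of $\delta_h$. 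I would then split $\delta_h$ into its part of degree at most $k'-2$ and its $\Xi_{k'-4}\oplus\Xi_{k'-3}$ part, apply Lemma \ref{lem4-2} with the degree-matched exponents $(p,q)=(k+1,0)$ on the former (admissible exactly because $k'\ge k+2$) and with $(p,q)=(k,1)$ on the $\Xi_{k'-4}$-part and $(p,q)=(k-1,2)$ on the $\Xi_{k'-3}$-part, and convert the resulting seminorms $|\cdot|_{1,h}$ and $|\cdot|_{2,h}$ of these top-level pieces into divergence norms via Lemma \ref{lem3.3}, which applies precisely because $\Xi_{k'-3}\cup\Xi_{k'-4}$ are assumed to satisfy Assumption \ref{assum4.1}. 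The main obstacle is the final quantitative step: one must show that the (now leading-order-vanishing) divergences of the top two enrichment components are small enough to gain the crucial single power of $h$ over the crude inverse-inequality bound, which requires carefully tracking the degree-by-degree coupling that ${\rm div}\delta_h=0$ imposes across the enrichment levels. This bookkeeping, rather than any single inequality, is where the hypotheses $k'\ge k+2$ and Assumption \ref{assum4.1} on $\Xi_{k'-3}\cup\Xi_{k'-4}$ are both consumed.
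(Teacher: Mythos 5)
Your setup is exactly the paper's: with $\delta_h:=\Pi_h\sigma-\sigma_h$ you correctly obtain ${\rm div}\,\delta_h=0$ from \eqref{commu}, the identity $\|\delta_h\|_0^2\lesssim(\mathcal A(\Pi_h\sigma-\sigma),\delta_h)-E_h(\Pi_h\sigma,\delta_h)$ via Lemma \ref{lem4.2}, and the reduction of the whole theorem to the sharp consistency bound $|E_h(\Pi_h\sigma,\delta_h)|\lesssim h^{k+1}\|\sigma\|_{k+1}\|\delta_h\|_0$. The gap is that you never actually prove this bound. Your plan — keep the $\Xi_{k'-3}$ and $\Xi_{k'-4}$ components of $\delta_h$, apply Lemma \ref{lem4-2} with $(p,q)=(k,1)$ and $(k-1,2)$ to them, and convert the resulting seminorms to divergence norms via Lemma \ref{lem3.3} — does not close by itself: Lemma \ref{lem3.3} controls $|\cdot|_{1,h}$ by $\|{\rm div}\cdot\|_0$, but you then need $\|{\rm div}(\text{top components})\|_0\lesssim\|\delta_h\|_0$ \emph{without} an inverse-estimate factor $h^{-1}$, and for the $(p,q)=(k-1,2)$ term you even need control of a second-order seminorm. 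You acknowledge this yourself ("the main obstacle is the final quantitative step\dots this bookkeeping\dots is where the hypotheses are both consumed"), which is an admission that the decisive estimate is missing rather than a proof of it. As written, your route loses at least one power of $h$.

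The paper closes this gap by a qualitatively different and much stronger observation: the top-level enrichment component of $\delta_h$ is not merely small, it is \emph{identically zero}. Writing $\delta_h=\sigma_1+\sigma_2$ with $\sigma_2\in\Xi_{k'-3}\cup\Xi_{k'-4}$ and $\sigma_1$ of degree at most $k'-2$, the constraint ${\rm div}(\sigma_1+\sigma_2)=0$ forces $\deg({\rm div}\,\sigma_2)\le k'-3$; since $k'\ge k+2$ and $\Xi_{k'-3}\cup\Xi_{k'-4}$ satisfies Assumption \ref{assum4.1} (so nonzero elements have nonvanishing divergence of full degree), a degree count gives $\sigma_2=0$. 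Hence $\delta_h$ has degree at most $k'-2$, and a \emph{single} application of Lemma \ref{lem4-2} with $\tilde k=k'-2$, $p=k+1$, $q=0$ (admissible precisely because $k'\ge k+2$) yields the $h^{k+1}$ consistency bound directly, with no need for the degree-by-degree cancellation analysis you defer. So the hypotheses $k'\ge k+2$ and Assumption \ref{assum4.1} are consumed in an exact algebraic vanishing argument, not in the quantitative bookkeeping you propose; supplying that vanishing argument is the missing idea in your proof.
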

\begin{proof}	Let $\Pi_h$ be   the same operator as in \eqref{commu}, then 
	  it suffices to show
	\[{\left\| {{\Pi _h}\sigma  - {\sigma _h}} \right\|_0} \lesssim {h^{k + 1}}{\left\| \sigma  \right\|_{k + 1}}.\]
	In fact,  we can write 
	 $$
	 \Pi_h\sigma-\sigma_h=\sigma_1+\sigma_2$$ with $\sigma_1\in\Sigma_{k,k',h}\setminus\left({{\Xi}_{k'-3}\cup{\Xi}_{k'-4}}\right)$ and $\sigma_2\in{\Xi}_{k'-3}\cup{\Xi}_{k'-4}$. By the community property (\ref{commu}), we get
	$$ {\rm div}(\sigma_1+\sigma_2) ={\rm div}(\Pi_h\sigma-\sigma_h)=0.$$
	Since the degree of the polynomial vector ${\rm div}\sigma_1$ is   $k'-3$, then the degree of ${\rm div}\sigma_2$ is also no more than $k'-3$. According to  Assumption \ref{assum4.1},  we can derive that 
	 $\sigma_2\in {\Xi}_{k'-4}=b  P^{hom}_{k'-4}(K; \mathbb{S}).$  This implies that    ${\rm div}\sigma_2$ is of degree $k'-2$ if $\sigma_2\neq 0$, which conflicts the conclusion that ${\rm div}\sigma_2$ is   no more than $k'-3$. Hence,  $\sigma_2=0$. As a result,  $\Pi_h{\sigma}-\sigma_h=\sigma_1$ is of degree at most $k'-2$.
	%
	
	 Now we set $\tilde{k}=k'-2, p=k+1, q=0$ in Lemma \ref{lem4-2}, then
	\begin{eqnarray*}
	{E_h}({\Pi_h}\sigma ,{\Pi _h}\sigma  - {\sigma _h}) 
	&\lesssim& {h^{k + 1}}{\left| {{\Pi _h}\sigma } \right|_{k + 1,h}}{\left\| {{\Pi _h}\sigma  - {\sigma _h}} \right\|_0} \\
	&\lesssim & h{\left| \sigma  \right|_{k + 1}}{\left\| {{\Pi _h}\sigma  - {\sigma _h}} \right\|_0}.
	\end{eqnarray*}
	From (\ref{continuous}),  (\ref{discrete2}) and Lemma \ref{lem4.2}, it follows
	\begin{equation*}\label{4-9}
	\begin{array}{rl}
	\left\| {\Pi_h\sigma  - {\sigma _h}} \right\|_0^2 &\lesssim{(\mathcal{A}(\Pi_h\sigma - {\sigma _h}), \Pi_h\sigma - {\sigma _h})_h}\\
	&=- (\mathcal{A}(\sigma - \Pi_h\sigma ),\Pi_h\sigma - {\sigma_h}) - {E_h}(\Pi_h\sigma, \Pi_h\sigma - {\sigma_h}).
	\end{array}
	\end{equation*}
	Combining the two estimates above indicates
	\[{\left\| {{\Pi _h}\sigma  - {\sigma _h}} \right\|_0} \lesssim{\left\| {\sigma  - {\Pi _h}\sigma } \right\|_0} + \frac{{{E_h}({\Pi _h}\sigma ,{\Pi _h}\sigma  - {\sigma _h})}}{{{{\left\| {{\Pi _h}\sigma  - {\sigma _h}} \right\|}_0}}} \lesssim{h^{k + 1}}{\left\| \sigma  \right\|_{k + 1}}.\]
	This finishes the proof.  
\end{proof}

\begin{rem}\label{rem4.2}
	We can verify that the space ${\Xi}_1\cup{\Xi}_2$ satisfies Assumtion \ref{assum4.1}. Thus,  for  the solution  $(\sigma_h,v_h)\in{\Sigma}_{3,5,h}\times V_{3,5,h}$, of (\ref{discrete2}),  it holds  
	\begin{equation}
	{\left\| {\sigma  - {\sigma _h}} \right\|_0} \lesssim{h^{4}}{\left\| \sigma  \right\|_{4}}.
	\end{equation}
\end{rem}

\section{Numerical results}

In this section, we  shall  give a numerical example to verify our theoretical analysis for the  scheme  \eqref{discrete1}, of the modified  mixed element $\Sigma_{k,k',h}-V_{k,k',h}$,   and the mass lumping scheme \eqref{discrete2} in three cases:   $k=3,k'=4$; $k=4,k'=5$; $k=3,k'=5$.

Take   $\Omega=[0,1]\times[0,1]$ and Lam$\acute{e}$ constants $\lambda=1, \mu=\frac{1}{2}$ in the model problem  (\ref{continuous}). Let the exact solution $(\sigma,u)$ be of the following form: 
\begin{eqnarray*}
u_1 &=  &- x_1^2{x_2}(2{x_2} - 1){({x_1} - 1)^2}({x_2} - 1),\\
u_2 &=& {x_1}x_2^2(2{x_1} - 1){({x_2} - 1)^2}({x_1} - 1),\\
\sigma_{11} &=&-\sigma_{22}= -2x_1x_2(2x_1^2 - 3x_1 +1)(2x_2^2-3x_2+1),\\
\sigma_{12} &=&\sigma_{21}= x_1x_2^2(x_2-1)^2(2x_1-\frac{3}{2}) - x_1^2x_2(x_1-1)^2(2x_2-\frac{3}{2})\\
 &&\quad \quad \qquad -\frac{x_1^2}{2}(2x_2-1)(x_1-1)^2(x_2-1)+\frac{x_2^2}{2}(2x_1-1)(x_1-1)(x_2-1)^2.
\end{eqnarray*}
We use $N\times N$ uniform triangular meshes for the computation  (cf. Figure \ref{fig1}), and list the    error results of the  stress and displacement approximations in Tables 2-4. 

	 Table 2 gives the  results of Hu-Zhang's element $\Sigma_{k,h}-V_{k,h}$  \cite{Hu2015Family}, the modified element  $\Sigma_{k,k',h}-V_{k,k',h}$ and the mass lumping scheme   for $k=3, k'=4$. 
	 Table 3   gives the  results of the three methods for $k=4, k'=5$.  And Table 4 gives the results of  the modified element  $\Sigma_{k,k',h}-V_{k,k',h}$ and the mass lumping scheme  for $k=3, k'=5$.    
		From the numerical results we have the following observations:
	\begin{itemize}	
		\item  As  same as Hu-Zhang's element, the modified element $\Sigma_{k,k',h}-V_{k,k',h}$ for $k=3,4$ yields the $k$-th order of convergence for  $\left\|{\rm div}(\sigma-\sigma_h)\right\|_0$ and $\left\|u-u_h\right\|_0$, and $k+1$-th order of convergence for $\left\|\sigma-\sigma_h\right\|_0$. This is conformable to the theoretical results in 
Theorem \ref{thm3-1}.
		
\item The mass lumping scheme of the modified element $\Sigma_{k,k',h}-V_{k,k',h}$  yields the $k$-th order of convergence for  $\left\|{\rm div}(\sigma-\sigma_h)\right\|_0$ and $\left\|u-u_h\right\|_0$, 
as is conformable to the theoretical result in 
Theorem \ref{thm4.1}.

\item The mass lumping scheme  of   $\Sigma_{k,k',h}-V_{k,k',h}$, with  $k=3, k'=4$ and $k=4, k'=5$, yields the $k$-th order of convergence for   $\left\|\sigma-\sigma_h\right\|_0$, one order lower than the original scheme, while the mass lumping scheme  with $k=3, k'=5$ yields the $k+1$-th order of convergence,   which is consistent with Remark \ref{rem4.2}.

\item Though the proposed modified element $\Sigma_{k,k',h}-V_{k,k',h}$ is of more degrees of freedom than Hu-Zhang's element $\Sigma_{k,h}-V_{k,h}$, its  mass lumping scheme   leads to a SPD   system that is much easier to solve. 
		\end{itemize}

\begin{figure}[htbp]
	\centering
	\subfigure[$2\times 2$]{
		\begin{minipage}[t]{0.35\linewidth}
			\centering
			\includegraphics[width=4cm]{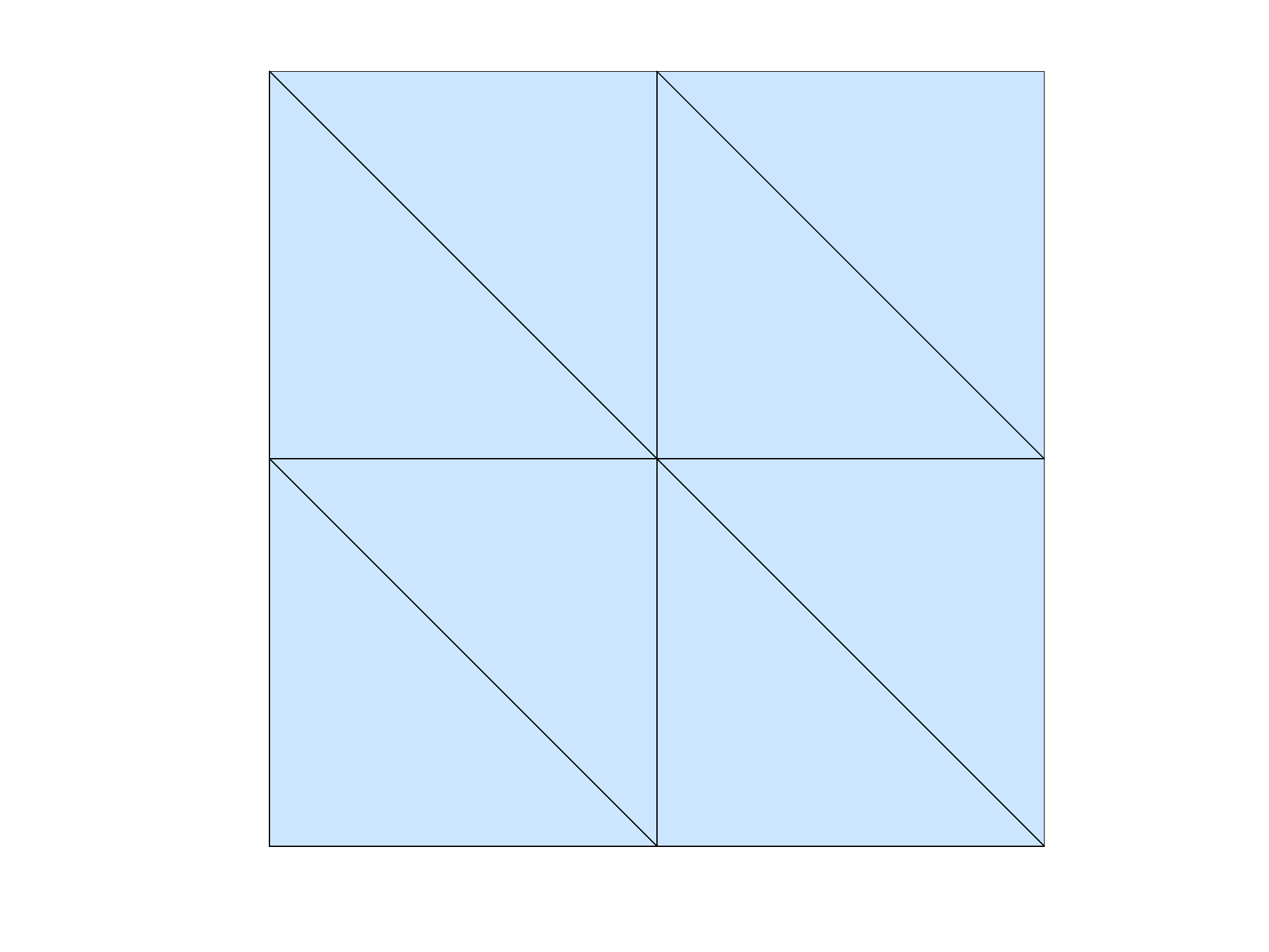}
		\end{minipage}
	}%
	\subfigure[$4\times 4$]{
		\begin{minipage}[t]{0.35\linewidth}
			\centering
			\includegraphics[width=4cm]{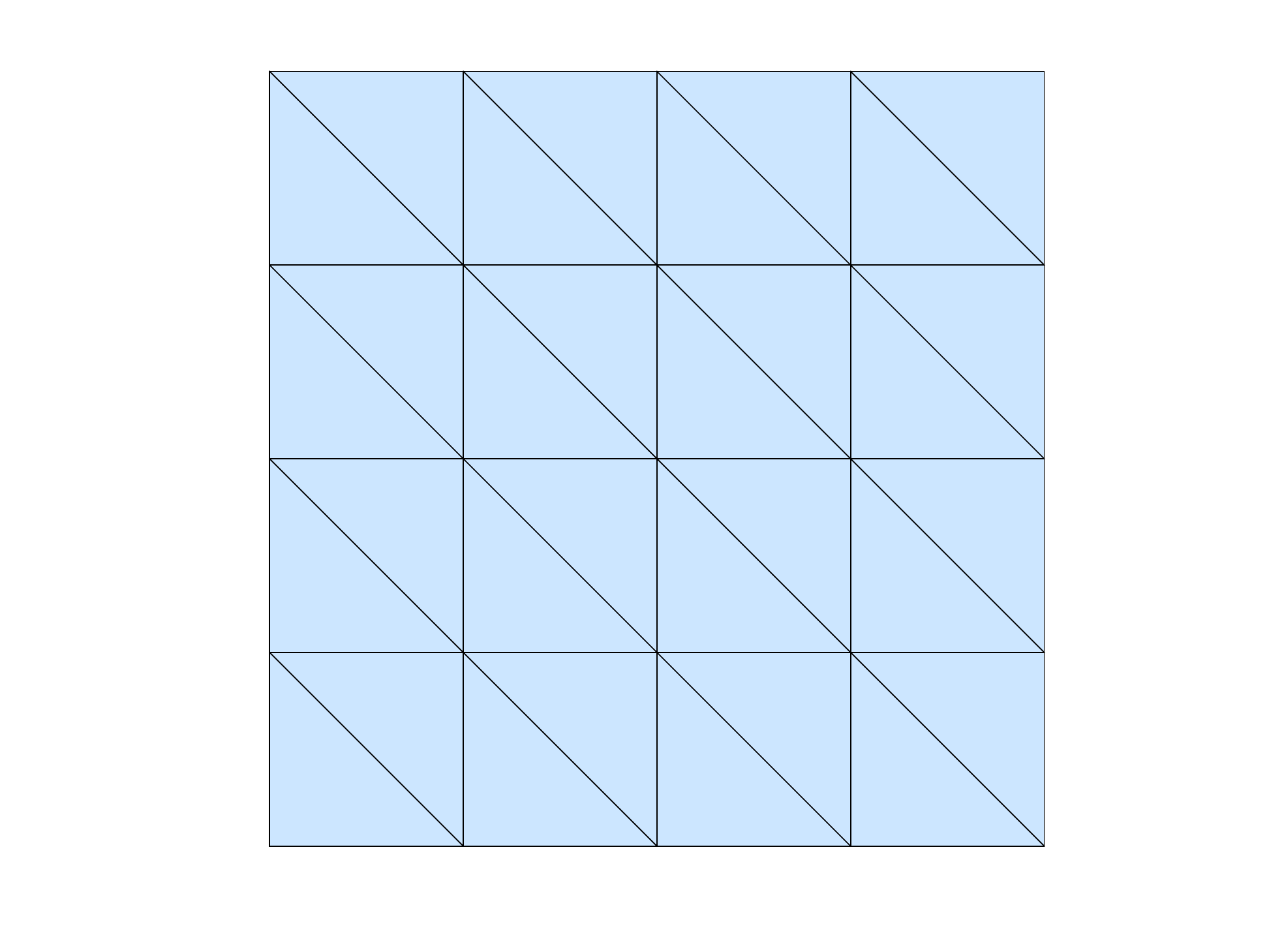}
		\end{minipage}
	}%
	\centering
	\caption{ The domain with uniform triangular meshes}\label{fig1}
\end{figure}
			
	\begin{table}
		\centering
		\caption{History of convergence: $k=3, k'=4$}
		\begin{tabular}{@{} c|c|cc|cc|cc @{}}
					\toprule
					 &\multirow{2}{*}{$N$}&
							\multicolumn{2}{c|}{$\frac{{{{\left\| {\sigma - \sigma _h} \right\|}_0}}}{{{{\left\| {\sigma} \right\|}_0}}}$} &
							\multicolumn{2}{c|}{$\frac{{\left\| {\rm div}({\sigma  - {\sigma _h}}) \right\|}_0}{{\left\| {\rm div}\sigma  \right\|}_0}$} &
							\multicolumn{2}{c}{$\frac{{{{\left\| {u - u_h} \right\|}_0}}}{{{{\left\| {u} \right\|}_0}}}$}\\
			 &&Error&Order&Error&Order&Error&Order\\
			\hline
			&$ 2$&9.361e-2&-- &  9.256e-2& --   &1.409e-1&--\\
			Hu-Zhang's element&$4$& 9.035e-3&3.37&1.480e-2&2.64&1.948e-2&2.85\\
			$\Sigma_{3,h}-V_{3,h}$&$8$& 6.498e-4&3.79&1.953e-3&2.92&2.590e-3&2.91\\
			&$16$&4.289e-5&3.92&2.473e-4&2.98&3.296e-4&2.97\\
			&$32$&2.742e-6&3.96&3.102e-5&2.99&4.139e-5&2.99\\
			\hline
			&$2$&1.065e-1&-- &  5.414e-2& --   &7.038e-2&--\\
			Modified element &$4$& 1.120e-2&3.25&7.438e-3&2.86&9.685e-3&2.86\\
			$\Sigma_{3,4,h}-V_{3,4,h}$ &$8$& 8.296e-4& 3.75&9.496e-4&2.96&1.240e-3&2.96\\
			&$16$&5.551e-5& 3.90&1.193e-4&2.99&1.565e-4&2.98\\
			&$32$&3.573e-6&3.95&1.493e-5&3.00&1.962e-5& 2.99\\
			\hline
			&$2$&1.219e-1&-- &  6.417e-2& --   &8.983e-2&--\\
			Mass lumping &$4$& 1.731e-2&2.81&7.880e-3&3.02&1.327e-2&2.75\\
			$\Sigma_{3,4,h}-V_{3,4,h}$&$8$& 2.0759e-3& 3.06&9.741e-4&3.01&1.758e-3&2.91\\
			&$16$&2.466e-4& 3.07&1.213e-4&3.00&2.232e-4&2.97\\
			&$32$&2.981e-5&3.04&1.515e-5&3.00&2.801e-5& 2.99\\			\hline
		\end{tabular}
	\end{table}
	
		\begin{table}
			\centering
			\caption{History of convergence: $k=4, k'=5$}
			\begin{tabular}{@{} c|c|cc|cc|cc @{}}
				\toprule
				&\multirow{2}{*}{$N  $}&
				\multicolumn{2}{c|}{$\frac{{{{\left\| {\sigma - \sigma _h} \right\|}_0}}}{{{{\left\| {\sigma} \right\|}_0}}}$} &
				\multicolumn{2}{c|}{$\frac{{\left\| {\rm div}({\sigma  - {\sigma _h}}) \right\|}_0}{{\left\| {\rm div}\sigma  \right\|}_0}$} &
				\multicolumn{2}{c}{$\frac{{{{\left\| {u - u_h} \right\|}_0}}}{{{{\left\| {u} \right\|}_0}}}$}\\
				&&Error&Order&Error&Order&Error&Order\\
				\hline
				&$2$&1.919e-2&-- &2.505e-2& --   &2.583e-2&--\\
			Hu-Zhang's element	&$4$&7.329e-4&4.71&1.724e-3&3.86&2.655e-3&3.28\\
				$\Sigma_{4,h}-V_{4,h}$&$8$&2.481e-5&4.88&1.101e-4&3.96&1.860e-4&3.83\\
				&$16$&8.043e-7&4.94&6.919e-6&3.99&1.194e-5&3.96\\
				&$32$&2.557e-8&4.97&4.330e-7&4.00&7.519e-7&3.99\\
				\hline
				&$2$&2.602e-2&-- &4.862e-3& --   &1.403e-2&--\\
				Modified element &$4$&9.792e-4&4.73&2.239e-4&4.44&6.087e-4&4.52\\
				$\Sigma_{4,5,h}-V_{4,5,h}$&$8$&3.302e-5& 4.88&1.243e-5&4.17&3.298e-5&4.20\\
				&$16$&1.069e-6&4.94&7.508e-7&4.04&1.980e-6&4.05\\
				&$32$&3.401e-8&4.97&4.650e-8&4.01&1.225e-7&4.01\\
				\hline		
				&$2$&3.679e-2&-- &6.097e-3& --   &1.751e-2&--\\
				Mass lumping &$4$&2.377e-3&3.95&2.532e-4&4.58&1.753e-3&3.32\\
				$\Sigma_{4,5,h}-V_{4,5,h}$ &$8$&1.499e-4&3.98&1.308e-5&4.27&1.223e-4&3.84\\
				&$16$&9.369e-6&4.00&7.690e-7&4.08&7.853e-6&3.96\\
				&$32$&5.843e-7&4.00&4.727e-8&4.02&4.942e-7&3.99\\				\hline
			\end{tabular}
		\end{table}
		
				\begin{table}
					\centering
					\caption{History of convergence: $k=3, k'=5$}
					\begin{tabular}{@{} c|c|cc|cc|cc @{}}
						\toprule
						&\multirow{2}{*}{$N $}&
						\multicolumn{2}{c|}{$\frac{{{{\left\| {\sigma - \sigma _h} \right\|}_0}}}{{{{\left\| {\sigma} \right\|}_0}}}$} &
						\multicolumn{2}{c|}{$\frac{{\left\| {\rm div}({\sigma  - {\sigma _h}}) \right\|}_0}{{\left\|{\rm div} \sigma  \right\|}_0}$} &
						\multicolumn{2}{c}{$\frac{{{{\left\| {u - u_h} \right\|}_0}}}{{{{\left\| {u} \right\|}_0}}}$}\\
						&&Error&Order&Error&Order&Error&Order\\
						\hline
				&	$2$&1.140e-1&-- &  3.226e-2& --   &5.201e-2&--\\
				 Modified element	&$4$&1.185e-2&3.26&4.176e-3&2.95&5.757e-3&3.17\\
					$\Sigma_{3,5,h}-V_{3,5,h}$	&$8$&8.745e-4& 3.76&5.248e-4&2.99&6.694e-4&3.10\\
						&$16$&5.841e-5& 3.90&6.567e-5&3.00&8.354e-5&3.00\\
						&$32$&3.757e-6&3.95&8.211e-6&3.00&1.045e-5& 3.00\\
						\hline
						&$2$&1.131e-1&-- &  3.575e-2& --   &6.825e-2&--\\
					Mass lumping 	&$4$&1.184e-2&3.25&4.621e-3&2.95&6.751e-3&3.33\\
					$\Sigma_{3,5,h}-V_{3,5,h}$	&$8$&8.751e-4& 3.75&5.809e-4&2.99&7.929e-4&3.08\\
						&$16$&5.850e-5& 3.90&7.271e-5&3.00&9.862e-5&3.00\\
						&$32$&3.764e-6&3.95&9.091e-6&3.00&1.233e-5& 3.00\\
						\hline						
					\end{tabular}
				\end{table}

	\bibliographystyle{plain}
	\bibliography{elasticity_ref_papers}
\end{document}